\def\longformule#1#2{
\displaylines{ \qquad{#1} \hfill\cr \hfill {#2} \qquad\cr } }
\def\inte#1{
\displaystyle\mathop{#1\kern0pt}^\circ }
\let\pa=\partial
\let\d=\delta
\let\e=\varepsilon
\let\r=\rho
\let\f=\frac
\let\p=\psi
\let\om=\omega
\let\D=\Delta
\let\Om=\Omega
\let\wt=\widetilde
\def\cC{{\mathcal C}}
\def\cF{{\mathcal F}}
\def\cG{{\mathcal G}}
\def\cR{{\mathcal R}}
\def\pa{\partial}
\def\Ga{\Gamma}
\def\grad{\nabla}
\def\virgp{\raise 2pt\hbox{,}}
\def\cdotpv{\raise 2pt\hbox{;}}
\def\eqdefa{\buildrel\hbox{\footnotesize def}\over =}
\def\C{\mathop{\mathbb C\kern 0pt}\nolimits}
\def\DD{\mathop{\mathbb D\kern 0pt}\nolimits}
\def\EE{\mathop{{\mathbb E \kern 0pt}}\nolimits}
\def\K{\mathop{\mathbb K\kern 0pt}\nolimits}
\def\N{\mathop{\mathbb N\kern 0pt}\nolimits}
\def\Q{\mathop{\mathbb Q\kern 0pt}\nolimits}
\def\R{\mathop{\mathbb R\kern 0pt}\nolimits}
\def\SS{\mathop{\mathbb S\kern 0pt}\nolimits}
\def\ZZ{\mathop{\mathbb Z\kern 0pt}\nolimits}
\def\TT{\mathop{\mathbb T\kern 0pt}\nolimits}
\def\P{\mathop{\mathbb P\kern 0pt}\nolimits}
\def\dv{\mbox{div}}
\def\dive{\mathop{\rm div}\nolimits}
\def\curl{\mathop{\rm curl}\nolimits}
\def\no{\noindent}
\def\na{\nabla}
\def\p{\partial}
\def\th{\theta}
\newcommand{\w}[1]{\langle {#1} \rangle}
\newcommand{\beq}{\begin{equation}}
\newcommand{\eeq}{\end{equation}}
\newcommand{\ben}{\begin{eqnarray}}
\newcommand{\een}{\end{eqnarray}}
\newcommand{\beno}{\begin{eqnarray*}}
\newcommand{\eeno}{\end{eqnarray*}}
\newcommand{\andf}{\quad\hbox{and}\quad}
\newcommand{\with}{\quad\hbox{with}\quad}
\newtheorem{thm}{Theorem}[section]
\newtheorem{lem}{Lemma}[section]
\newtheorem{rmk}{Remark}[section]
\newtheorem{col}{Corollary}[section]
\newtheorem{prop}{Proposition}[section]
\renewcommand{\theequation}{\thesection.\arabic{equation}}
\begin{document}

\title[Axisymmetric solutions of 3-D Inhomogeneous Navier-Stokes system]
{Global smooth axisymmetric solutions of 3-D Inhomogenenous
incompressible Navier-Stokes system}

\author[H.  Abidi]{Hammadi Abidi}
\address[H.  Abidi]{D\'epartement de Math\'ematiques
Facult\'e des Sciences de Tunis Campus universitaire 2092 Tunis,
Tunisia} \email{habidi@univ-evry.fr}
\author[P. Zhang]{Ping Zhang} \address[P. Zhang]{Academy of Mathematics $\&$ Systems
Science\\
and  Hua Loo-Keng Key Laboratory of Mathematics, Chinese Academy of
Sciences,  Beijing 100190, CHINA.} \email{zp@amss.ac.cn}

\date{Sept.  10, 2014}

\maketitle

\begin{abstract} In this paper, we investigate the global regularity
to 3-D  inhomogeneous incompressible Navier-Stokes system with
axisymmetric initial data which does not have swirl component for
the initial velocity. We first prove that the $L^\infty$ norm to the
quotient of the inhomogeneity by $r,$ namely
$a/r\eqdefa\bigl(1/\r-1\bigr)\bigl/r,$ controls the regularity of
the solutions. Then  we  prove the global regularity of such
solutions provided that the $L^\infty$ norm of $a_0/r$ is
sufficiently small. Finally, with additional assumption that the
initial velocity belongs to $L^p$ for some $p\in [1,2),$ we  prove
that the velocity field decays to zero with exactly the same rate as
the classical Navier-Stokes system.
\end{abstract}

\noindent {\sl Keywords:}  Inhomogeneous Navier-Stokes Equations,
axisymmetric flow, decay rate\

\vskip 0.2cm

\noindent {\sl AMS Subject Classification (2000):} 35Q30, 76D03  \

\renewcommand{\theequation}{\thesection.\arabic{equation}}
\setcounter{equation}{0}
%%%%%%%%%%%%%%%%%%%%%%%%%%%%%%%%%%%%%%%%%%%%%%
%%%%%%%%%%%%%%%%%%%%%%%%%%%%%%%%%%%%%%%%%%
\section{Introduction}

In this paper, we consider the global existence of smooth
 solutions to the following 3-D inhomogeneous
incompressible Navier-Stokes equations with axisymmetric initial
data which does not have swirl component for the initial velocity:
\begin{equation}\label{eq1.1}
\quad\left\{\begin{array}{l}
\displaystyle \pa_t \rho + \dv ( \rho u)=0,\qquad (t,x)\in \R^+\times\R^3,\\
\displaystyle \pa_t (\rho u) + \dv(\rho u \otimes u) -\Delta u+ \grad\Pi=0, \\
\displaystyle \dv u = 0, \\
\displaystyle (\rho, u)|_{t=0}=(\rho_0, u_{0}).
\end{array}\right.
\end{equation}
where $\rho, u=(u^1,u^2, u^z)$ stand for the density and  velocity
of the fluid respectively, and $\Pi$  is a scalar pressure function.
Such system describes a  fluid that is  incompressible but has
non-constant density. Basic examples are mixture of incompressible
and non reactant flows, flows with complex structure (e.g. blood
flow or model of rivers), fluids containing a melted substance, etc.
\smallskip

 A lot of recent works have been dedicated to the mathematical study
of the above system. Global weak solutions with finite energy have
been constructed by  Simon in \cite{Simon} (see also the book by
Lions \cite{LP} for the variable viscosity case). In the case of
smooth data with no vacuum, the existence of strong unique solutions
goes back to the work of Ladyzhenskaya and  Solonnikov in \cite{LS}.
 More
precisely, they  considered the system \eqref{eq1.1} in a bounded
domain $\Om$ with homogeneous Dirichlet boundary condition for $u.$
Under the assumption that $u_0\in W^{2-\frac2p,p}(\Om)$ $(p>d)$ is
divergence free and vanishes on  $\p\Om$ and that $\r_0\in C^1(\Om)$
is bounded away from zero, then they \cite{LS} proved
\begin{itemize}
\item Global well-posedness in dimension $d=2;$
\item Local well-posedness in dimension $d=3.$ If in addition $u_0$ is small in $W^{2-\frac2p,p}(\Om),$
then global well-posedness holds true.
\end{itemize}
Lately,  Danchin and Mucha  \cite{dm} established the well-posedness
of \eqref{eq1.1} in the whole space $\R^d$ in the so-called
\emph{critical functional framework} for small perturbations of some
positive constant density. The basic idea are to use functional
spaces (or norms) that  is \emph{scaling invariant} under the
following transformation:
\begin{equation}\label{eq:scalinginhomo}
(\rho,u,\Pi)(t,x)\longmapsto (\rho,\lambda u,\lambda^2\Pi)
(\lambda^2 t,\lambda x),\qquad (\rho_0,u_0)(x)\longmapsto
(\rho_0,\lambda u_0)(\lambda x).
\end{equation} One may check \cite{AGZ2,dz} and the references therein for the recent
progresses along this line.

\smallskip

On the other hand, we recall that except the initial data have some
special structure, it is still not known whether or not the System
\eqref{eq1.1} has a unique global smooth solution with large smooth
initial data, even for the classical Navier-Stokes system $(NS),$
which corresponds to $\r=1$ in \eqref{eq1.1}.  For instance,
Ukhovskii and Yudovich \cite{UY}, and independently Ladyzhenskaya
\cite{La} proved the global existence of generalized solution along
with its uniqueness and regularity  for $(NS)$ with initial data
which is axisymmetric and without swirl. Leonardi, M\'{a}lek,
Ne\u{c}as and Pokorny \cite{LMNP} gave a refined proof of the same
result in \cite{ La, UY}.  The first author \cite{Abidi} improved
the regularity of the initial data to be $u_0\in H^{\f12}.$ In
general, the global wellposedness of $(NS)$ with axisymmetric
initial data is still open (see \cite{CL, ZZ} for instance).

\smallskip

 Let $x=(x_1, x_2, z)\in
\mathbb{R}^3,$ we  denote the cylindrical coordinates of $x$ by $(r,
\theta, z),$ i. e., $ r(x_1, x_2)\eqdefa\sqrt{x_1^2+x_2^2}, \quad
\theta(x_1, x_2) \eqdefa \tan^{-1} \frac{x_2}{x_1}$ with $r \in [0,
\infty), \, \theta \in [0, 2\pi]$ and $z \in \mathbb{R},$ and
\begin{equation*}
e_{r}\eqdefa(\cos \theta, \sin \theta, 0), \quad
e_{\theta}\eqdefa(-\sin \theta, \cos \theta, 0),\quad
e_{z}\eqdefa(0, 0, 1).
\end{equation*} We are concerned  here with the global existence of
axisymmetric smooth solutions to \eqref{eq1.1} which does not have
the swirl component for the velocity field. This means solution of
the form:
\begin{equation}\label{eq1.2}
\begin{split}
 &\rho(t,x_1, x_2, z)=\rho(t, r, z), \quad \Pi(t, x_1, x_2, z)=\Pi(t, r, z), \\
&  u(t,x_1,x_2,z)=u^r(t,r, z)e_r+u^{z}(t,r, z)e_{z}.
\end{split}
\end{equation}

By virtue of \eqref{eq1.1} and \eqref{eq1.2}, we find that
$(\r,u,\Pi)$ verifies
\begin{equation}\label{eq1.3}
\begin{cases}
&\pa_t \rho+u^r \pa_r \rho+u^{z} \pa_z \rho=0,\\
 &\r\pa_t u^r + \r u^r \pa_r u^r+\r u^{z} \pa_z u^r+\pa_r \Pi
-  \bigl(\frac{1}{r}\pa_r(r\pa_r
u^r)+\pa_z^2u^r -\frac{u^r}{r^2}\bigr)=0,\\
 &\r\pa_t
u^{z} + \r u^r \pa_r u^{z}+\r u^{z} \pa_z u^{z} +\pa_z
 \Pi-  \bigl(\frac{1}{r}\pa_r(r\pa_r
u^z)+\pa_z^2u^z \bigr)=0,\\
&\pa_ru^r+\frac{u^r}{r}+\pa_zu^z=0,\\
&\r|_{t=0}=\r_0\andf (u^r,u^z)|_{t=0}=(u_0^r,u_0^z).
\end{cases}
\end{equation}

\no {\bf Equation of  vorticity $\omega \eqdefa \pa_zu^r-\pa_r
u^z$:} we get, by taking $\pa_z (1.4)_2-\pa_r (1.4)_3,$ that
\begin{equation}\label{eq1.6}
\pa_t \omega +  u^r \pa_r \omega +u^{z} \pa_z \omega-\frac{1}{r}u^r
\omega+\pa_z\Bigl(\frac{\pa_r
\Pi}{\rho}\Bigr)-\pa_r\Bigl(\frac{\pa_z \Pi}{\rho}\Bigr)
-\pa_z\Bigl(\frac{\pa_z \omega}{\rho}\Bigr)-\pa_r\Bigl(\frac{\pa_r
\omega+\omega/r}{\rho}\Bigr)=0.
\end{equation}

\no {\bf Equation of  $\Ga\eqdefa \frac{\omega}{r}$:} in view of
\eqref{eq1.6}, one has
\begin{equation}\label{eq1.7}
\pa_t \Ga +  u^r \pa_r \Ga +u^{z} \pa_z
\Ga+\frac{1}{r}\pa_z\Bigl(\frac{\pa_r
 \Pi}{\rho}\Bigr)-\frac{1}{r}\pa_r\Bigl(\frac{\pa_z  \Pi}{\rho}\Bigr) -
\pa_z\Bigl(\frac{\pa_z
\Ga}{\rho}\Bigr)-\frac{1}{r}\pa_r\Bigl(\frac{r\pa_r
\Ga+2\Ga}{\rho}\Bigr)=0.
\end{equation}

As for the classical Navier-Stokes system $(NS)$ in \cite{La, UY},
the quantity $\Ga$ will play a  crucial role  to prove the global
well-poseness of \eqref{eq1.3}. The main result of this paper states
as follows:

\begin{thm}\label{thm1.1}
{\sl   Let $a_0\eqdefa \frac{1}{\rho_0}-1\in L^{2}\cap L^\infty $
with  $\frac{a_0}{r}\in L^\infty,$ and there exist positive
constants $m, M$ so that \beq\label{eq1.11} 0<m\leq \r_0\leq M. \eeq
Let $u_0=u_0^re_r+u_0^ze_z\in H^1$ be a solenoidal vector filed with
$\f{u_0^r}r$ and $\Ga_0\eqdefa\frac{\omega_0}{r}$ belonging to $
L^2.$ Then

\begin{itemize}

\item[(1)]
there exists a positive time $T^\ast$ so that \eqref{eq1.3} has a
unique solution $(\r, u)$ on $[0,T^\ast)$ which satisfies for any
$T<T^\ast$ \beq\label{eq1.13}\begin{split} &\r\in
L^\infty((0,T)\times\R^3),\quad u\in
\cC([0,T];H^1(\R^3))\with \na u \in L^2((0,T);H^1(\R^3))\\
&\sup_{t\in (0,T]}\Bigl(t\w{t}\bigl(\|u_t(t)\|_{L^2}^2+\|
u(t)\|_{\dot{H}^2}^2+\|\na\Pi(t)\|_{L^2}^2\bigr)+\int_0^tt'\w{t'}\|\na
u_t(t')\|_{L^2}^2\,dt'\Bigr)<\infty.
\end{split}
\eeq If $T^\ast<\infty,$ there holds
 \beq\label{eq4.6} \lim_{t\to
T^\ast}\bigl\|\f{a(t)}r\bigr\|_{L^\infty}=\infty. \eeq

\item[(2)]
If we assume  moreover that
 \beq\label{eq1.8}
\bigl\|\f{a_0}r\bigr\|_{L^\infty}\leq \e_0 \eeq  for some
sufficiently small  positive constant $\e_0,$ we have
$T^\ast=\infty,$ and \beq\label{eq1.14}
\begin{split}
 \|
u\|_{L^\infty(\R^+;H^1)}^2&+\bigl\|\f{u^r}r\bigr\|_{L^\infty(\R^+;L^2)}^2+\|\na u\|_{L^2(\R^+;H^1)}^2+\|\p_tu\|_{L^2(\R^+;L^2)}^2\\
&\qquad\qquad\qquad\qquad+\|\na\Pi\|_{L^2(\R^+;L^2)}^2\leq
C\cG_0 +1\with\\
\cG_0\eqdefa
&\exp\Bigl(C\|u_0\|_{L^2}^2\bigl(1+\|u_0\|_{L^2}^6\bigr)\Bigr)\bigl(\|
u_0\|_{H^1}^2+\bigl\|\f{u_0^r}{r}\bigr\|_{L^2}^2+2\|\Ga_0\|_{L^2}^2\bigr),\end{split}
\eeq and \beq \label{eq1.9}
\bigl\|\f{a}r\bigr\|_{L^\infty(\R^+;L^\infty)}\leq C
\bigl\|\f{a_0}r\bigr\|_{L^\infty}. \eeq

\item[(3)] Besides \eqref{eq1.8}, if $u_0\in L^p$ for some $p\in [1,2),$ let
$\beta(p)\eqdefa\frac{3}{4}\Bigl(\frac{2}{p}-1\Bigr),$ one has
\begin{equation}\label{eq4.1} \begin{split}
&\|u(t)\|_{L^2}^2\leq C\w{t}^{-2\beta(p)}, \quad \|\na u(t)\|_{L^2}^2\leq  C\w{t}^{-1-2\beta(p)},\\
& \|u_t(t)\|_{L^2}^2+ \|u(t)\|_{\dot{H}^2}^2+\|\na\Pi(t)\|_{L^2}^2
\leq C t^{-1}\w{t}^{-1-2\beta(p)}.
\end{split}
\eeq \end{itemize}
 }
\end{thm}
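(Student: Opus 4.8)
The plan is to establish Theorem~\ref{thm1.1} in three stages following its three-part structure, with the crucial analytic input being the control provided by $\|a/r\|_{L^\infty}$ announced in the abstract.

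\medskip

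\textbf{Step 1: Local existence and the blow-up criterion \eqref{eq4.6}.} First I would construct the local solution by a standard approximation scheme: regularize the density bound away from zero (which is preserved by the transport equation $(1.3)_1$ along the flow of the divergence-free field $u$), smooth the initial data, and solve the resulting linear-in-$u$ system by a fixed-point argument in the energy space $\cC([0,T];H^1)\cap L^2((0,T);H^2)$. The divergence-free transport of $\r$ propagates the bounds \eqref{eq1.11} for all time, so the only genuine obstruction to continuation is control of the velocity regularity. The key structural observation is the equation \eqref{eq1.7} for $\Ga=\om/r$: because the stretching term $-\frac1r u^r\om$ in the vorticity equation \eqref{eq1.6} becomes, after dividing by $r$, an advantageous lower-order term, $\Ga$ satisfies a transport-diffusion equation whose only dangerous coupling is through the pressure commutator $\frac1r\bigl(\pa_z(\rho^{-1}\pa_r\Pi)-\pa_r(\rho^{-1}\pa_z\Pi)\bigr)$. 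Writing $\rho^{-1}=1+a$, this commutator is linear in $a$, and I would estimate it in terms of $\|a/r\|_{L^\infty}\|\na\Pi\|_{L^2}$, closing an a priori bound for $\|\Ga(t)\|_{L^2}$ and $\|u^r/r\|_{L^2}$ in terms of $\|a/r\|_{L^\infty}$. Coupling this with the basic energy identity for $(1.3)_2$--$(1.3)_3$ and elliptic regularity for $\Pi$ yields that all the norms in \eqref{eq1.13} stay finite as long as $\int_0^t\|a(s)/r\|_{L^\infty}^2\,ds$ does, giving \eqref{eq4.6}.

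\medskip

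\textbf{Step 2: Smallness of $a_0/r$ gives global existence and \eqref{eq1.9}.} Here the decisive point is that the quantity $a/r$ is itself \emph{transported}: since $a=\rho^{-1}-1$ solves the same transport equation as $\rho^{-1}$, and since the flow map of an axisymmetric divergence-free field preserves the form $r\mapsto r(t)$ with $r(t)\ge 0$ comparable to its initial value along characteristics (because $\pa_r u^r+u^r/r+\pa_z u^z=0$ forces the radial coordinate to evolve multiplicatively), the ratio $a/r$ obeys a transport equation with a zeroth-order coefficient controlled by $u^r/r$. Consequently $\|a(t)/r\|_{L^\infty}\le C\|a_0/r\|_{L^\infty}$ once $\int_0^\infty\|u^r/r\|_{L^\infty}$-type quantities are controlled. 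Under the smallness hypothesis \eqref{eq1.8}, the pressure commutator in Step 1 is $O(\e_0)$, which I would absorb into the diffusion on the left-hand side of the $\Ga$- and energy-estimates; this yields a closed global bound and, by the blow-up criterion \eqref{eq4.6}, $T^\ast=\infty$. Tracking the constants through Gr\"onwall with the exponential weight $\exp(C\|u_0\|_{L^2}^2(1+\|u_0\|_{L^2}^6))$ arising from the nonlinear energy cascade produces exactly \eqref{eq1.14} and \eqref{eq1.9}. \emph{The main obstacle is precisely this step}: one must verify that the Lagrangian bound on $a/r$ and the a~priori energy estimate do not require each other in a circular way, so that a continuity/bootstrap argument genuinely closes for small $\e_0$ rather than merely on a short time interval.

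\medskip

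\textbf{Step 3: Decay rates under $u_0\in L^p$, $p\in[1,2)$.} With global existence in hand, I would derive \eqref{eq4.1} by the Fourier-splitting (Schonbek) method adapted to the variable-density setting. The energy inequality $\frac{d}{dt}\|\sqrt\rho\,u\|_{L^2}^2+2\|\na u\|_{L^2}^2\le0$ combined with the lower frequency estimate: splitting the dissipation into $|\xi|\le g(t)$ and $|\xi|>g(t)$ and bounding the low-frequency part of $u$ through the $L^p\hookrightarrow \widehat{L^{p'}}$ embedding gives $\|\hat u(t)\|_{L^\infty}\lesssim\|u_0\|_{L^p}+\int_0^t\|u\otimes u\|_{L^{p}}$, which propagates the $L^p$-type information and yields $\|u(t)\|_{L^2}^2\lesssim\w{t}^{-2\beta(p)}$ with $\beta(p)=\frac34(\frac2p-1)$. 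The higher-order rates for $\|\na u\|_{L^2}^2$, $\|u_t\|_{L^2}^2$, $\|u\|_{\dot H^2}^2$ and $\|\na\Pi\|_{L^2}^2$ then follow by interpolating these algebraic decay bounds against the time-weighted estimates \eqref{eq1.13}, i.e.\ multiplying the second-order energy identity by the weights $t\w{t}$ and invoking the already-established decay of the lower-order norms. The delicate part is controlling the density fluctuation $a$ uniformly so that $\sqrt\rho\,u$ and $u$ decay at the same rate—this is where \eqref{eq1.9} is used to ensure $\rho$ stays uniformly close to $1$ and hence does not degrade the exponent $\beta(p)$ relative to the classical $(NS)$ case.
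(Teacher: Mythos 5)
Your Steps 1 and 3 follow the paper's route in spirit (the $\Ga$-equation with the pressure commutator bounded by $\bigl\|\f{a}r\bigr\|_{L^\infty}\|\na\Pi\|_{L^2}$ for the blow-up criterion; Schonbek's Fourier splitting for the decay), but Step 2 has a genuine gap --- one that you flag yourself as ``the main obstacle'' and then do not resolve. The difficulty is quantitative: the propagation estimate for $a/r$ is \eqref{eq2.14a}, $\bigl\|\f{a(t)}r\bigr\|_{L^\infty}\le\bigl\|\f{a_0}r\bigr\|_{L^\infty}\exp\bigl(\|\f{u^r}r\|_{L^1_t(L^\infty)}\bigr)$, and the only bound available for the exponent from the $\Ga$-estimates is $\|\f{u^r}r\|_{L^1_t(L^\infty)}\lesssim t^{\f34}\|\Ga\|_{L^\infty_t(L^2)}^{\f12}\|\na\Ga\|_{L^2_t(L^2)}^{\f12}$, which \emph{grows in time even when the $\Ga$-norms are bounded}. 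Consequently ``the pressure commutator is $O(\e_0)$'' holds only on a time interval whose length tends to infinity as $\e_0\to0$ but is finite for every fixed $\e_0$; the absorption argument you propose closes exactly the estimate of Proposition~\ref{prop2.1}, valid up to $t_1\sim\bigl(\ln(1/\e_0)\bigr)^{4/3}$ as in \eqref{eq2.16}, and no choice of small $\e_0$ makes it close globally. Smallness of $a_0/r$ alone cannot break this circularity.

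The missing idea is the second, independent mechanism the paper uses to exit this loop: a small-energy global regularity lemma (subsection 2.2), which uses neither the axisymmetric structure nor $a/r$ --- if $\|\na u(t_0)\|_{L^2}\le\eta_1$ with $\eta_1$ depending only on $\|u_0\|_{L^2}$ and $m$, then the $H^1$ bound propagates for all $t\ge t_0$ with no reference to the density fluctuation at all. Since the energy identity \eqref{eq2.29} gives $\int_0^\infty\|\na u\|_{L^2}^2\,dt\le\f12\|\sqrt{\r_0}u_0\|_{L^2}^2$, such a $t_0$ exists and is bounded in terms of the data only, \emph{independently of} $\e_0$. Proposition~\ref{prop2.2} then chooses $\e_0$ so small that $t_1\ge t_0$ and glues the local regime $[0,t_1]$ to the small-energy regime $[t_0,\infty)$; this gluing is what your bootstrap lacks. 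Relatedly, \eqref{eq1.9} does not follow from your Step 2 either: the required global bound $\int_0^\infty\bigl\|\f{u^r}r\bigr\|_{L^\infty}\,dt<\infty$ is obtained in the paper only via $\|\na u\|_{L^1(\R^+;L^\infty)}\le C$ (Lemma~\ref{lem4.2}), which needs the time-weighted estimates of Corollary~\ref{col3.1} and $L^6$ Stokes estimates --- i.e.\ the decay machinery of Section 3 feeds back into part (2), rather than being a downstream consequence of it. The same weighted estimate \eqref{eq3.17} is also what makes the variable-density term $a(\D u-\na\Pi)$ in the Duhamel formula integrable in your Step 3, a term your sketch omits from the displayed low-frequency bound.
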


\begin{rmk}\label{rmk1.2}
\begin{itemize}

\item[(1)] Let us recall that the reason why one can prove the
global well-posdeness of classical 3-D Navier-Stokes system with
axisymmetric data and without swirl is that $\Ga\eqdefa\f{\om}r$
satisfies \beno
\p_t\Ga+u^r\p_r\Ga+u^z\p_z\Ga-\p_r^2\Ga-\p_z^2\Ga-\f3r\p_r\Ga=0,
\eeno which implies for all $p\in [1,\infty]$ that \beno
\|\Ga(t)\|_{L^p}\leq \|\Ga_0\|_{L^p}. \eeno Nevertheless in the case
of inhomogeneous Navier-Stokes system, $\Ga$ verifies \eqref{eq1.7}.
Then to get a global in time estimate for $\|\Ga(t)\|_{L^2},$ we
need the smallness condition \eqref{eq1.8}. We remark that in order
to prove the global regularity for the axisymmetric
Navier-Stokes-Boussinesq system without swirl, the authors
\cite{AHK2} require the support of the initial density $\r_0$ does
not intersect the axis $(Oz)$ and the projection of supp$\r_0$ on
the axis is a compact set, which seems stronger than \eqref{eq1.8}
near the axis $(Oz)$. Finally since we shall not use the vorticity
equation \eqref{eq1.6}, here we do not require the initial density
to be close enough to some positive constant.

\item[(2)] We remark that the decay estimates \eqref{eq4.1}  is in fact proved for
general global smooth solutions of \eqref{eq1.1}, which does not use
the axisymmetric structure of the solutions,  whenever $u_0\in L^p$
for some $p\in [1,2).$ In particular, we get rid of the technical
assumption in \cite{AGZ1} that \eqref{eq4.1} holds for $p\in
(1,6/5)$ and moreover the proof here is more concise than that in
\cite{AGZ1}. \end{itemize}
\end{rmk}

Let us complete this section with the notations we are going to use
in this context.

\smallbreak \noindent{\bf Notations:} $\dot{H}^s$ (resp. $H^s$)
denotes the homogeneous (resp. inhomogeneous) Sobolev space with
norm given by $\|f\|_{\dot{H}^s}\eqdefa
\Bigl(\int_{\R^3}|\xi|^{2s}|\widehat{f}(\xi)|^2\,d\xi\Bigr)^{\f12}$
(resp. $\|f\|_{{H}^s}\eqdefa
\Bigl(\int_{\R^3}\bigl(1+|\xi|^2\bigr)^{s}|\widehat{f}(\xi)|^2\,d\xi\Bigr)^{\f12}$).
For $X$ a Banach space and $I$ an interval of $\R,$ we denote by
${\mathcal{C}}(I;\,X)$ the set of continuous functions on $I$ with
values in $X.$  For $q\in[1,+\infty],$ the notation $L^q(I;\,X)$
stands for the set of measurable functions on $I$ with values in
$X,$ such that $t\longmapsto\|f(t)\|_{X}$ belongs to $L^q(I).$ Let
$R^2_+=(0,\infty)\times\R,$ we denote $\|f\|_{\wt{L}^q}\eqdefa
\bigl(\int_{\R^2_+}|f|^q\,dr\,dz\bigr)^{\f1q}.$ For $a\lesssim b$,
we mean that there is a uniform constant $C,$ which may be different
on different lines, such that $a\leq Cb$. We shall denote by $(a|b)$
(or $\int_{\R^3} a | b\, dx$) the $L^2(\R^3)$ inner product of $a$
and $b,$ and  finally $\wt{\na}\eqdefa (\p_r, \p_z).$

\renewcommand{\theequation}{\thesection.\arabic{equation}}
\setcounter{equation}{0}

\section{The  global $H^1$ estimate}

In this section, we shall prove the {\it a priori} globally in time
$H^1$ estimate for the velocity of \eqref{eq1.1} provided that there
holds \eqref{eq1.8}. Before proceeding, let us first rewrite the
momentum equation of \eqref{eq1.3}.

 Due to
$\pa_ru^r+\frac{u^r}{r}+\pa_zu^z=0$ and $\curl u=\omega e_{\theta}$
with $\omega\eqdefa \pa_z u^r-\pa_r u^z,$ we have
\begin{equation*}
\begin{split}
\frac{1}{r}\pa_r(r\pa_r u^r)+\pa_z^2u^r
-\frac{u^r}{r^2}=&-\frac{1}{r}\pa_r(r\pa_z u^z+u^r)+\pa_z^2u^r
-\frac{u^r}{r^2}\\
=&-\frac{1}{r}\bigl(r\pa_z \pa_r u^z-\frac{u^r}{r}
\bigr)+\pa_z^2u^r -\frac{u^r}{r^2}\\
=&\pa_z(\pa_z u^r-\pa_r u^z)=\pa_z \omega.
\end{split}
\end{equation*}
Similarly, one has
\begin{equation*}
\begin{split}
\frac{1}{r}\pa_r(r\pa_r u^z)+\pa_z^2u^z =&\pa_r^2 u^z+\frac{\pa_r
u^z}{r}-\pa_z\bigl(\pa_r u^r+\frac{u^r}{r}\bigr)\\
=&-\pa_r(\pa_z u^r-\pa_r u^z)
-\frac{1}{r}(\pa_z u^r-\pa_r u^z)\\
=&-\pa_r\omega -\frac{1}{r}\omega.
\end{split}
\end{equation*}
So that we can reformulate the momentum equation of \eqref{eq1.3} as
\begin{equation}\label{eq1.5}
\begin{cases}
&\r\pa_t u^r + \r u^r \pa_r u^r+\r u^{z} \pa_z u^r+\pa_r \Pi
- \pa_z \omega=0,\\
 &\r\pa_t
u^{z} + \r u^r \pa_r u^{z}+\r u^{z} \pa_z u^{z} +\pa_z
 \Pi+ \pa_r\omega +\frac{1}{r}\omega=0.
\end{cases}
\end{equation}

\subsection{Local in time $H^1$ estimate} The purpose of this subsection is
to present the estimate of $\|u\|_{L^\infty_T(H^1)}$ with $T$ going
to $\infty$ when $\e_0$ in \eqref{eq1.8} tending to zero.

\no $\bullet$ \underline{ $L^2$ energy estimate}

We first deduce from the transport equation of \eqref{eq1.3} and
\eqref{eq1.11} that \beq \label{eq2.1a}  m \leq \r(t,r,z)\leq  M.
\eeq While  by first multiplying the $u^r$ equation of \eqref{eq1.3}
by $u^r$ and then integrating the resulting equation over $\R^2_+$
with respect to the measure $r\,dr\,dz,$ we write \beno
\begin{split}
\f12\f{d}{dt}\int_{\R^2_+}\r&(u^r)^2\,rdr\,dz-\int_{\R^2_+}\bigl(r\p_t\r+\p_r(\r
u^r r)+\p_z(\r u^x
r)\bigr)(u^r)^2\,dr\,dz\\&-\int_{\R^2_+}\Pi\p_r(u^r r)\,dr\,dz
+\int_{R^2_+}\bigl((\p_ru^r)^2+(\p_zu^r)^2+\f{(u^r)^2}{r^2}\bigr)r\,dr\,dz=0.
\end{split}
\eeno Whereas using the transport equation and
$\p_r(u^rr)+\p_z(u^zr)=0$ of \eqref{eq1.3}, we find \beno
\begin{split}
r\p_t\r+\p_r(\r u^r r)+\p_z(\r u^z
r)=&r\bigl(\p_t\r+u^r\p_r\r+u^z\p_z
u^r)+\r\bigl(\p_r(u^rr)+\p_z(u^zr)\bigr)=0,
\end{split} \eeno so that we obtain
\beno
\begin{split}
\f12\f{d}{dt}\int_{\R^2_+}\r(u^r)^2\,rdr\,dz+\int_{R^2_+}\Bigl((\p_ru^r)^2+&(\p_zu^r)^2+\f{(u^r)^2}{r^2}\Bigr)r\,dr\,dz\\
 &\qquad\qquad=\int_{\R^2_+}\Pi\p_r(u^r r)\,dr\,dz .
\end{split}
\eeno Along the same line, we have \beno
\begin{split}
\f12\f{d}{dt}\int_{\R^2_+}\r(u^z)^2\,rdr\,dz+\int_{R^2_+}\bigl((\p_ru^z)^2+&(\p_zu^z)^2\bigr)r\,dr\,dz\\
 &\qquad\qquad=\int_{\R^2_+}\Pi\p_z(u^z r)\,dr\,dz .
\end{split} \eeno Hence due to $\p_r(ru^r)+\p_z(ru^z)=0,$ we achieve
\beno
\f12\f{d}{dt}\int_{\R^2_+}\r\bigl((u^r)^2+(u^z)^2\bigr)r\,dr\,dz+\int_{R^2_+}\Bigl(|\wt{\na}u^r|^2+&|\wt{\na}u^z|^2+\f{(u^r)^2}{r^2}\Bigr)r\,dr\,dz=0.
\eeno Integrating the above inequality over $[0,t]$ and using
\eqref{eq2.1a} gives rise to \beq\label{eq2.1}
\begin{split}
\|u\|_{L^\infty_t(L^2)}^2+\|\wt{\na}u\|_{L^2_t(L^2)}^2+\bigl\|\f{u^r}r\bigr\|_{L^2_t(L^2)}^2\leq
C\|u_0\|_{L^2}^2. \end{split} \eeq

\no$\bullet$ \underline{ $\dot{H}^1$ energy estimate}

By taking $L^2(\R^2_+,r\,dr\,dz)$ inner product of the $u^r$
equation of \eqref{eq1.3} with $\p_tu^r$ and using integration by
parts, we have \beno
\begin{split}
\f12\f{d}{dt}\int_{\R^2_+}\Bigl((\p_ru^r)^2&+(\p_zu^r)^2+\f{(u^r)^2}{r^2}\Bigr)r\,dr\,dz+\int_{\R^2_+}\r(\p_tu^r)^2r\,dr\,dz\\
&=-\int_{\R^2_+}\r\bigl( u^r\p_ru^r+u^z\p_zu^r\bigr)
\p_tu^rr\,dr\,dz+\int_{\R^2_+}\Pi \p_r(\p_tu^rr)\,dr\,dz.
\end{split}
\eeno Similarly we have \beno
\begin{split}
\f12\f{d}{dt}\int_{\R^2_+}\bigl((\p_ru^z)^2&+(\p_zu^z)^2\bigr)r\,dr\,dz+\int_{\R^2_+}\r(\p_tu^z)^2r\,dr\,dz\\
&=-\int_{\R^2_+}\r\bigl( u^r\p_ru^z+u^z\p_zu^z\bigr)
\p_tu^zr\,dr\,dz+\int_{\R^2_+}\Pi \p_z(\p_tu^zr)\,dr\,dz,
\end{split}
\eeno which together  $\p_r(ru^r)+\p_z(ru^z)=0$ gives rise to \beno
\begin{split}
\f12&\f{d}{dt}\int_{\R^2_+}\Bigl(|\wt{\na}u^r|^2+|\wt{\na}u^z|^2+\f{(u^r)^2}{r^2}\Bigr)r\,dr\,dz
+\int_{\R^2_+}\r\bigl((\p_tu^r)^2+(\p_tu^z)^2\bigr)r\,dr\,dz\\
=&-\int_{\R^2_+}\r\bigl( u^r\p_ru^r+u^z\p_zu^r\bigr)
\p_tu^rr\,dr\,dz-\int_{\R^2_+}\r\bigl( u^r\p_ru^z+u^z\p_zu^z\bigr)
\p_tu^zr\,dr\,dz\\
\leq &
C\Bigl(\|\sqrt{\r}u^r\p_ru^r\|_{L^2}^2+\|\sqrt{\r}u^z\p_zu^r\|_{L^2}^2+\|\sqrt{\r}u^r\p_ru^z\|_{L^2}^2+\|\sqrt{\r}u^z\p_zu^z)\|_{L^2}^2\Bigr)\\
&\qquad\qquad\qquad\qquad\qquad\qquad\qquad+\f12\bigl(\|\sqrt{\r}\p_tu^r\|_{L^2}^2+\|\sqrt{\r}\p_tu^z\|_{L^2}^2\bigr),
\end{split}
\eeno which along with \eqref{eq2.1a} implies \beq\label{eq2.2}
\begin{split}
\f{d}{dt}&\int_{\R^2_+}\Bigl(|\wt{\na}u^r|^2+|\wt{\na}u^z|^2+\f{(u^r)^2}{r^2}\Bigr)r\,dr\,dz
+\|\p_tu^r\|_{L^2}^2+\|\p_tu^z\|_{L^2}^2\\
 &\qquad\qquad\leq
C\Bigl(\|u^r\p_ru^r\|_{L^2}^2+\|u^z\p_zu^r\|_{L^2}^2+\|u^r\p_ru^z\|_{L^2}^2+\|u^z\p_zu^z)\|_{L^2}^2\Bigr).
\end{split}
\eeq

\no$\bullet$ \underline{The second derivative estimate of the
velocity}

By taking $L^2(\R^2_+;r\,dr\,dz)$ inner product of the $u^r$
equation of \eqref{eq1.5} with $\p_z\om$ and using integration by
parts, one has \beno
\begin{split}
\int_{\R^2_+}(\p_z\om)^2r\,dr\,dz=-\int_{\R^2_+}\p_z\p_r\Pi\ |\ \om
r\,dr\,dz-\int_{\R^2_+}\bigl(\r\p_tu^r+\r u^r\p_ru^r+\r
u^z\p_zu^r\bigr)\ |\ \p_z\om r\,dr\,dz. \end{split} \eeno Similarly
taking $L^2(\R^2_+;r\,dr\,dz)$ inner product of the $u^z$ equation
of \eqref{eq1.5} with $\p_r(r\om)r^{-1}$ leads to \beno
\begin{split}
\int_{\R^2_+}(\p_r(r\om))^2r^{-1}\,dr\,dz=&\int_{\R^2_+}\p_z\p_r\Pi\
|\ \om r\,dr\,dz\\
&-\int_{\R^2_+}\bigl(\r\p_tu^z+\r u^r\p_ru^z+\r u^z\p_zu^z\bigr)\ |
\p_r(\om r)\,dr\,dz. \end{split} \eeno Yet notice that \beno
\begin{split}
\int_{\R^2_+}(\p_r(r\om))^2r^{-1}\,dr\,dz=&\int_{\R^2_+}\bigl(\f{\om^2}r+2\om\p_r\om+(\p_r\om)^2r\bigr)\,dr\,dz\\
=&\int_{\R^2_+}\bigl(\f{\om^2}{r^2}+(\p_r\om)^2\bigr)r\,dr\,dz.
\end{split}
\eeno
 As a consequence, for $\Ga$ given by \eqref{eq1.7}, we obtain
\beq \label{eq2.3}
\begin{split}
\int_{\R^2_+}\bigl((\p_r\om)^2+(\p_z\om)^2+\Ga^2\bigr)r\,dr\,dz\leq
&
C\Bigl(\|u^r_t\|_{L^2}^2+\|u^z_t\|_{L^2}^2+\|u^r\p_ru^r\|_{L^2}^2\\
&+\|u^z\p_zu^r\|_{L^2}^2
+\|u^r\p_ru^z\|_{L^2}^2+\|u^z\p_zu^z)\|_{L^2}^2\Bigr).
\end{split}
\eeq Along the same line, we have \beq \label{eq2.4}
\begin{split}
\|\wt{\na}\Pi\|_{L^2}^2\leq
C\Bigl(&\|u^r_t\|_{L^2}^2+\|u^z_t\|_{L^2}^2+\|u^r\p_ru^r\|_{L^2}^2\\
&+\|u^z\p_zu^r\|_{L^2}^2+\|u^r\p_ru^z\|_{L^2}^2+\|u^z\p_zu^z\|_{L^2}^2\Bigr).
\end{split}
\eeq

\no$\bullet$ \underline{The combined estimate}

Let $\d>0$ be a small positive constant, which will be chosen
hereafter. By summing up \eqref{eq2.2} with $\d\times
\bigl(\eqref{eq2.3}+\eqref{eq2.4}\bigr)$ leads to \beno
\begin{split}
\f{d}{dt}&\int_{\R^2_+}\Bigl(|\wt{\na}u^r|^2+|\wt{\na}u^z|^2+\f{(u^r)^2}{r^2}\Bigr)r\,dr\,dz
+(1-2C\d)\bigl(\|\p_tu^r\|_{L^2}^2+\|\p_tu^z\|_{L^2}^2\bigr)\\
&+\d\Bigl(\int_{\R^2_+}\bigl(|\wt{\na}\om|^2+\Ga^2\bigr)r\,dr\,dz+\|\wt{\na}\Pi\|_{L^2}^2\Bigr)\\
 \leq &
C\Bigl(\|u^r\p_ru^r\|_{L^2}^2+\|u^z\p_zu^r\|_{L^2}^2+\|u^r\p_ru^z\|_{L^2}^2+\|u^z\p_zu^z\|_{L^2}^2\Bigr).
\end{split}
\eeno Taking $\d=\f1{4C}$ in the above inequality yields
\beq\label{eq2.5}
\begin{split}
\f{d}{dt}&\int_{\R^2_+}\Bigl(|\wt{\na}u^r|^2+|\wt{\na}u^z|^2+\f{(u^r)^2}{r^2}\Bigr)r\,dr\,dz
+\|\p_tu^r\|_{L^2}^2+\|\p_tu^z\|_{L^2}^2\\
&+\int_{\R^2_+}\bigl(|\wt{\na}\om|^2+\Ga^2\bigr)r\,dr\,dz+\|\wt{\na}\Pi\|_{L^2}^2\\
 \leq &
C\Bigl(\|u^r\p_ru^r\|_{L^2}^2+\|u^z\p_zu^r\|_{L^2}^2+\|u^r\p_ru^z\|_{L^2}^2+\|u^z\p_zu^z\|_{L^2}^2\Bigr).
\end{split}
\eeq

 In order to cope with the right hand side terms in \eqref{eq2.5}, we  take  cut-off functions $\varphi\in
C_0^\infty[0,\infty)$  and $\psi \in C^\infty[0,\infty)$ with
\beq\label{eq2.5a} \varphi(r)=\left\{\begin{array}{l}
\displaystyle 1\quad r\in [0,1/2],\\
\displaystyle 0\quad r\in [1,\infty),
\end{array}\right. \andf \psi(r)=\left\{\begin{array}{l}
\displaystyle 1\quad r\in [1/2, \infty),\\
\displaystyle 0\quad r\in [0,1/4),
\end{array}\right.\eeq
and present the  lemma as follows:
\begin{lem}\label{lem2.1}
{\sl Let $f(r,z)$ be a smooth enough function which decays
sufficiently fast at infinity.  Then for $\varphi(r)$ given by
\eqref{eq2.5a}, one has \beq\label{eq2.7a}
\int_{\R^2_+}f^4\varphi(r)r^3\,dr\,dz\leq
C\|f\|_{L^2}^2\bigl(\|f\|_{L^2}+\|\p_rf\|_{L^2}\bigr)\|\p_zf\|_{L^2}.
\eeq}
\end{lem}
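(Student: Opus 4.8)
The plan is to reduce the two–dimensional estimate to two successive one–dimensional inequalities, one in the $z$ variable and one in the $r$ variable, the cutoff $\varphi$ being used to absorb the extra weight $r^2$ (recall that on $\Supp\varphi$ one has $r\le 1$). Throughout I understand $\|g\|_{L^2}^2=\int_{\R^2_+}g^2\,r\,dr\,dz$ (the axisymmetric $L^2(\R^3)$ norm, up to the harmless factor $2\pi$), and I set $a(r)\eqdefa\int_\R f^2\,dz$ and $b(r)\eqdefa\int_\R(\p_zf)^2\,dz$.

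First I would freeze $r$ and treat $z\mapsto f(r,z)$ as a function of one variable. Since $f$ decays at infinity, the one–dimensional Agmon (Gagliardo–Nirenberg) inequality $\sup_z f^2\le 2\|f\|_{L^2_z}\|\p_zf\|_{L^2_z}$ yields $\int_\R f^4\,dz\le 2a(r)^{3/2}b(r)^{1/2}$, whence
\[
\int_{\R^2_+}f^4\varphi(r)r^3\,dr\,dz\le 2\int_0^\infty a(r)^{3/2}b(r)^{1/2}\varphi(r)r^3\,dr.
\]

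The heart of the matter — and the step I expect to be the main obstacle — is the pointwise control in $r$ of the weighted quantity $a(r)\varphi(r)r^2$. Since $\varphi$ is compactly supported, $a\varphi r^2$ vanishes at infinity, so the fundamental theorem of calculus gives $a(r)\varphi(r)r^2=-\int_r^\infty\p_s\bigl(a(s)\varphi(s)s^2\bigr)\,ds$, and hence
\[
\sup_{r>0}\bigl(a(r)\varphi(r)r^2\bigr)\le\int_0^\infty\Bigl(|a'(s)|\varphi s^2+a|\varphi'|s^2+2a\varphi s\Bigr)\,ds.
\]
Here $a'(s)=2\int_\R f\p_rf\,dz$, so the first term is $\le 2\int_{\R^2_+}|f||\p_rf|\varphi s^2\,ds\,dz$; using $\varphi s^2\le s$ on $\Supp\varphi$ and Cauchy–Schwarz it is bounded by $2\|f\|_{L^2}\|\p_rf\|_{L^2}$. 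The third term equals $2\int_{\R^2_+}f^2\varphi s\,ds\,dz\le 2\|f\|_{L^2}^2$, and the second, supported where $\varphi'\neq0$ (so that $s\le1$ and $|\varphi'|\le C$), is $\le C\|f\|_{L^2}^2$. Altogether this gives
\[
\sup_{r>0}\bigl(a(r)\varphi(r)r^2\bigr)\le C\|f\|_{L^2}\bigl(\|f\|_{L^2}+\|\p_rf\|_{L^2}\bigr).
\]
The delicate point is precisely that the weight $r^2$ in $a\varphi r^2$ is compensated by one power of $r$ from the measure together with the constraint $r\le1$ coming from $\varphi$; this is exactly why the estimate is not scale invariant and why the cutoff is indispensable.

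Finally I would combine the two steps. Writing $a^{3/2}b^{1/2}\varphi r^3=(a\varphi r^2)\cdot(a^{1/2}b^{1/2}r)$ and pulling out the supremum,
\[
\int_0^\infty a^{3/2}b^{1/2}\varphi r^3\,dr\le\sup_{r>0}\bigl(a\varphi r^2\bigr)\int_0^\infty a^{1/2}b^{1/2}r\,dr\le\sup_{r>0}\bigl(a\varphi r^2\bigr)\|f\|_{L^2}\|\p_zf\|_{L^2},
\]
the last inequality being Cauchy–Schwarz in $r$ applied to $\int_0^\infty(a^{1/2}r^{1/2})(b^{1/2}r^{1/2})\,dr$. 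Inserting the bound for the supremum then yields $\int_{\R^2_+}f^4\varphi r^3\,dr\,dz\le C\|f\|_{L^2}^2(\|f\|_{L^2}+\|\p_rf\|_{L^2})\|\p_zf\|_{L^2}$, which is exactly \eqref{eq2.7a}.
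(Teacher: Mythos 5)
Your proof is correct and is essentially the paper's own argument in a lightly reorganized form: the paper bounds $r^2f^2\varphi(r)$ and $rf^2$ pointwise via the fundamental theorem of calculus in $r$ and in $z$ respectively and then multiplies and integrates, whereas you integrate in $z$ first (your Agmon inequality is that same FTC step) and apply the FTC-in-$r$ argument to the $z$-integrated quantity $a(r)\varphi(r)r^2$. The key mechanism is identical in both: split the weight $r^3=r^2\cdot r$, let the cutoff $\varphi$ absorb the excess powers of $r$, and pair $\partial_r$ with $r^2\varphi$ and $\partial_z$ with $r$ before concluding by Cauchy--Schwarz.
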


\begin{proof} It is easy to observe that
\beno \begin{split} r^2f^2\varphi(r)\leq
&\int_0^\infty|\p_r(r^2f^2\varphi(r))|\,dr\\
\leq & C\int_0^\infty|f|(|f|+|\p_rf|)r\,dr, \end{split} \eeno and
\beno rf^2\leq \int_{\R}|\p_zf^2|r\,dz=2\int_{\R}|f||\p_zf|r\,dz,
\eeno  from which, we infer \beno
\begin{split}
\int_{\R^2_+}f^4\varphi(r)r^3\,dr\,dz\leq&
C\int_{\R^2_+}\int_0^\infty|f|(|f|+|\p_rf|)r\,dr\int_{\R}|f||\p_zf|r\,dz\,dr\,dz\\
\leq &C
\int_{\R^2_+}|f|(|f|+|\p_rf|)r\,dr\,dz\int_{\R^2_+}|f||\p_zf|r\,dr\,dz.
\end{split}
\eeno Applying H\"older inequality gives rise to \eqref{eq2.7a}.
\end{proof}

Now let us turn to the estimate of the nonlinear terms in
\eqref{eq2.5}. We first get, by applying H\"older's inequality and
the 2-D interpolation inequality, \beq\label{eq2.7b}
\|f\|_{L^4(\R^2)}\lesssim \|f\|_{L^2(\R^2)}^{\f12}\|\na
f\|_{L^2(\R^2)}^{\f12},\eeq that \beno
\begin{split}
\|u^r\p_ru\|_{L^2}^2\leq
&\|\p_ru\|_{\wt{L}^4}^2\|\sqrt{r}u^r\|_{\wt{L}^4}^2\\
\leq &C\Bigl(\int_{\R^3}\om^4
r^{-1}\,dx\Bigr)^{\f12}\|\sqrt{r}u^r\|_{\wt{L}^2}\|\wt{\na}(\sqrt{r}u^r)\|_{\wt{L}^2},
\end{split}
\eeno where we used Biot-Sarvart's law \beno
u(t,x)=\f1{4\pi}\int_{\R^3}\f{(y-x)\wedge e_\th
\om(t,y)}{|y-x|^3}\,dy \eeno and the fact that $r^{-1}$ is in $A^p$
class (see \cite{ga04} for instance) so that \beno
\|\p_ru\|_{\wt{L}^4}=\Bigl(\int_{\R^3}|\p_r
u|^4r^{-1}\,dx\Bigr)^{\f14}\leq
C\Bigl(\int_{\R^3}\om^4r^{-1}\,dx\Bigr)^{\f14}. \eeno  Then by
virtue of \eqref{eq2.7a} and \eqref{eq2.7b}, we infer \beno
\begin{split} \Bigl(\int_{\R^3}\om^4r^{-1}\,dx\Bigr)^{\f14}\leq
&\Bigl(\int_{\R^2_+}\Ga^4r^4\varphi(r)\,dr\,dz\Bigr)^{\f14}+
\Bigl(\int_{\R^2_+}\om^4(1-\varphi(r))\,dr\,dz\Bigr)^{\f14}\\
\lesssim &\Bigl(\int_{\R^2_+}\Ga^4r^3\varphi(r)\,dr\,dz\Bigr)^{\f14}+\|\om\psi\|_{\wt{L}^4}\\
\lesssim &\|\Ga \|_{L^2}^{\f12}\bigl(\|\Ga
\|_{L^2}^{\f12}+\|\wt{\na}\Ga \|_{L^2}^{\f12}\bigr)+\|\om \psi
\|_{\wt{L}^2}^{\f12}\|\wt{\na}(\om
\psi) \|_{\wt{L}^2}^{\f12}\\
\lesssim &\|\Ga \|_{L^2}^{\f12}\bigl(\|\Ga
\|_{L^2}^{\f12}+\|\wt{\na}\Ga \|_{L^2}^{\f12}\bigr)+
\|\om\|_{L^2}^{\f12}\bigl(\|\om\|_{L^2}^{\f12}+\|\wt{\na}\om\|_{L^2}^{\f12}\bigr).
\end{split}
\eeno Moreover, note that \beno
\begin{split}
&\|\wt{\na}(\sqrt{r}u^r)\|_{\wt{L}^2}\leq
C\bigl(\|\wt{\na}u^r\|_{L^2}+\bigl\|\f{u^r}r\bigr\|_{L^2}\bigr),
\end{split}
\eeno for any $\d>0,$  we write \beq\label{eq2.6}
\begin{split}
\|u^r\p_ru\|_{L^2}^2\leq &C\Bigl(\|\Ga \|_{L^2}\bigl(\|\Ga
\|_{L^2}+\|\wt{\na}\Ga \|_{L^2}\bigr)+
\|\om\|_{L^2}\bigl(\|\om\|_{L^2}+\|\wt{\na}\om\|_{L^2}\bigr)\Bigr)\\
&\times\|u^r\|_{L^2}\bigl(\|\wt{\na}u^r\|_{L^2}+\bigl\|\f{u^r}r\bigr\|_{L^2}\bigr)\\
\leq&
C_\d\|u^r\|_{L^2}^2\bigl(\|\wt{\na}u^r\|_{L^2}^2+\bigl\|\f{u^r}r\bigr\|_{L^2}^2\bigr)\bigl(\|\om\|_{L^2}^2+\|\Ga
\|_{L^2}^2\bigr)\\
&+\d \bigl(\|\om\|_{L^2}^2+\|\wt{\na}\om\|_{L^2}^2+\|\Ga
\|_{L^2}^2+\|\wt{\na}\Ga \|_{L^2}^2\bigr).
\end{split}
\eeq

To deal with $\|u^z\p_z u\|_{L^2},$ we split
$\int_{\R^2_+}(u^z\p_zu)^2r\,dr\,dz$ as \beq\label{eq2.7}
\int_{\R^2_+}(u^z\p_zu)^2r\,dr\,dz=\int_{\R^2_+}(u^z\p_zu)^2\varphi(r)r\,dr\,dz+
\int_{\R^2_+}(u^z\p_zu)^2(1-\varphi(r))r\,dr\,dz. \eeq By applying
\eqref{eq2.7b} and convexity inequality, we get for any $\d>0$
\beq\label{eq2.7ad}
\begin{split}
\int_{\R^2_+}&(u^z\p_zu)^2(1-\varphi(r))r\,dr\,dz\\
\lesssim
&\int_{\R^2_+}(u^z\p_zu\psi(r)r^{\f12})^2\,dr\,dz\\
\lesssim
&\bigl\|u^z\psi(r)r^{\f14}\bigr\|_{\wt{L}^4}^2\bigl\|\p_zu\psi(r)
r^{\f14}\bigr\|_{\wt{L}^4}^2\\
\lesssim
&\bigl\|u^z\psi(r)r^{\f14}\bigr\|_{\wt{L}^2}\bigl\|\wt{\na}(u^z\psi(r)r^{\f14})\bigr\|_{\wt{L}^2}\bigl\|\p_zu\psi
r^{\f14}\bigr\|_{\wt{L}^2}\bigl\|\wt{\na}(\p_zu\psi
r^{\f14})\bigr\|_{\wt{L}^2}\\
\leq
&C_\d\|u^z\|_{L^2}^2\bigl(\|u^z\|_{L^2}^2+\|\wt{\na}u^z\|_{L^2}^2\bigr)\|\p_zu\|_{L^2}^2+\d\bigl(\|\p_zu\|_{L^2}^2
+\|\wt{\na}\p_zu\|_{L^2}^2\bigr).
\end{split}
\eeq

Before proceeding, let us recall from (2.22) of \cite{MZ13} that
\beq\label{eq2.8}
\f{u^r}r=\p_z\D^{-1}\Ga-2\f{\p_r}r\D^{-1}\p_z\D^{-1}\Ga, \eeq and
from (21) of \cite{HR11} that \beq\label{eq2.8a}
\f{\p_r}r\D^{-1}W=\f{x_2^2}{r^2}\cR_{11}W+\f{x_1^2}{r^2}\cR_{22}W-2\f{x_1x_2}{r^2}\cR_{12}W
\eeq for every axisymmetric smooth function $W,$ and where
$\cR_{ij}\eqdefa \p_i\p_j\D^{-1}.$

By virtue of \eqref{eq2.7a}, we infer \beno
\begin{split}
\int_{\R^2_+}(u^z\p_zu^r)^2\varphi(r)r\,dr\,dz=&\int_{\R^2_+}(u^z)^2r^{\f32}\varphi^{\f12}(r)\bigl(\p_z\f{u^r}r\bigr)^2r^{\f32}\varphi^{\f12}(r)\,dr\,dz\\
\leq &
\Bigl(\int_{\R^2_+}(u^z)^4r^3\varphi(r)\,dr\,dz\Bigr)^{\f12}\Bigl(\int_{\R^2_+}\bigl(\p_z\f{u^r}r\bigr)^4r^3\varphi(r)\,dr\,dz\Bigr)^{\f12}\\
\lesssim
&\|u^z\|_{L^2}\bigl(\|u^z\|_{L^2}^{\f12}+\|\p_ru^z\|_{L^2}^{\f12}\bigr)\|\p_zu^z\|_{L^2}^{\f12}
\bigl\|\p_z\f{u^r}r\bigr\|_{L^2}\\
&\qquad\qquad\quad\times\bigl(\bigl\|\p_z\f{u^r}r\bigr\|_{L^2}^{\f12}+\bigl\|\p_z\p_r\f{u^r}r\bigr\|_{L^2}^{\f12}\bigr)
\|\p_z^2\f{u^r}r\bigr\|_{L^2}^{\f12}.
\end{split} \eeno
Yet it follows from \eqref{eq2.8} and \eqref{eq2.8a} that \beno
\begin{split}
\bigl\|\p_z\f{u^r}r\bigr\|_{L^2}\lesssim \|\Ga\|_{L^2},\quad
\|\p_z^2\f{u^r}r\bigr\|_{L^2}\lesssim \|\p_z\Ga\|_{L^2}\andf
\bigl\|\p_z\p_r\f{u^r}r\bigr\|_{L^2}\lesssim \|\wt{\na}\Ga\|_{L^2}.
\end{split}
\eeno Therefore, for any $\d>0,$ we have \beq\label{eq2.8bd}
\begin{split}
\int_{\R^2_+}(u^z\p_zu^r)^2\varphi(r)r\,dr\,dz\leq
C\|u^z\|_{L^2}^2\bigl(1+\|u^z\|_{L^2}^4\bigr)&\|\wt{\na}u^z\|_{L^2}^2\|\Ga\|_{L^2}^2\\
&+\d\bigl(
 \|\Ga\|_{L^2}^2+\|\wt{\na}\Ga\|_{L^2}^2\bigr).
 \end{split}
 \eeq

While since $\p_r(ru^r)+\p_z(ru^z)=0,$ we have \beq\label{eq2.8c}
\int_{\R^2_+}(u^z\p_zu^z)^2\varphi(r)r\,dr\,dz=\int_{\R^2_+}\bigl(u^z\bigl(\p_ru^r+\f{u^r}r\bigr)\bigr)^2\varphi(r)r\,dr\,dz.
\eeq Due to \eqref{eq2.8} and \eqref{eq2.8a}, we have \beno
\begin{split}
\int_{\R^2_+}\bigl(\f{u^z u^r}{r}\bigr)^2\varphi(r)r\,dr\,dz\leq
&\Bigl(\int_{\R^2_+}(u^z)^3\varphi^2(r)r\,dr\,dz\Bigr)^{\f23}\bigl\|\f{u^r}r\bigr\|_{L^6}^2\\
\leq
&\Bigl(\int_{\R^3}\f{(u^z)^3}{r^{\f32}}\,dx\Bigr)^{\f23}\bigl\|\p_z\D^{-1}\Ga\bigr\|_{L^6}^2\\
\leq  & C\|\na u^z\|_{L^2}^2\|\Ga\|_{L^2}^2.
\end{split}
\eeno where we used Sobolev-Hardy inequality from \cite{BT02} that
\beq\label{hardy} \int_{\R^N}\f{|u|^{q_\ast(s)}}{|x'|^s}\,dx\leq
C(s,q,N,k)\Bigl(\int_{\R^N}|\na u|^q\,dx\Bigr)^{\f{N-s}{N-q}}, \eeq
where $x=(x',z)\in R^N=R^k\times\R^{N-k}$ with $2\leq k\leq N,$
$1<q<N,$ $0\leq s\leq q$ and $s<k,$ $q_\ast\eqdefa\f{q(N-s)}{N-q},$
so that there holds \beno
\Bigl(\int_{\R^3}\f{(u^z)^3}{r^{\f32}}\,dx\Bigr)^{\f13}\leq C \|\na
u^z\|_{L^2}. \eeno

Whereas it follows from \eqref{eq2.8} that \beno
\p_ru^r=\p_z\D^{-1}\Ga+r\p_z\p_r\D^{-1}\Ga-2\p_r^2\D^{-1}\p_z\D^{-1}\Ga.
\eeno Applying Hardy's inequality \eqref{hardy} once again yields
\beno
\begin{split}
\int_{\R^2_+}(u^z)^2\bigl(\p_z\D^{-1}\Ga\bigr)^2\varphi(r)r\,dr\,dz\leq
&\Bigl(\int_{\R^2_+}|u^z|^3\varphi^{\f32}(r)r\,dr\,dz\Bigr)^{\f23}\|\p_z\D^{-1}\Ga\|_{L^6}^2\\
\leq
&\Bigl(\int_{\R^3}\f{|u^z|^3}{r^{\f32}}\,dx\Bigr)^{\f23}\|\Ga\|_{L^2}^2\\
\lesssim & \|\na u^z\|_{L^2}^2\|\Ga\|_{L^2}^2.
\end{split}
\eeno Similarly, by applying Lemma \ref{lem2.1}, one has
\beq\label{eq2.8b}
\begin{split}
\int_{\R^2_+}&(u^z)^2\bigl(r\p_z\p_r\D^{-1}\Ga\bigr)^2\varphi(r)r\,dr\,dz\\
\leq
&\Bigl(\int_{\R^2_+}(u^z)^4\varphi(r)r^3\,dr\,dz\Bigr)^{\f12}\Bigl(\int_{\R^2_+}|\p_z\p_r\D^{-1}\Ga|^4\varphi(r)r^3\,dr\,dz\Bigr)^{\f12}\\
\leq
&C\|u^z\|_{L^2}\bigl(\|u^z\|_{L^2}^{\f12}+\|\p_ru^z\|_{L^2}^{\f12}\bigr)\|\p_zu^z\|_{L^2}^{\f12}\|\Ga\|_{L^2}
\bigl(\|\Ga\|_{L^2}^{\f12}+\|\wt{\na}\Ga\|_{L^2}^{\f12}\bigr)\|\p_z\Ga\|_{L^2}^{\f12}\\
\leq & C_\d\|u^z\|_{L^2}^2\bigl(1+\|u^z\|_{L^2}^4\bigr)\|\wt{\na}
u^z\|_{L^2}^2\|\Ga\|_{L^2}^2+\d\bigl(\|\Ga\|_{L^2}^{2}+\|\wt{\na}\Ga\|_{L^2}^{2}\bigr).
\end{split}
\eeq Let $W\eqdefa \p_z\D^{-1}\Ga.$ Then by virtue of \eqref{eq2.8},
we find \beno
\begin{split}
\p_r^2\D^{-1}W=&\p_r\Bigl(\f{x_2^2}{r}\cR_{11}W+\f{x_1^2}{r}\cR_{22}W-2\f{x_1x_2}{r}\cR_{12}W\Bigr)\\
=&\sin^2\th\cR_{11}W+\cos^2\th\cR_{22}W-2\sin\th\cos\th
\cR_{12}W\\
&+r\bigl(\sin^2\th\p_r\cR_{11}W+\cos^2\th\p_r\cR_{22}W-2\sin\th\cos\th\p_r\cR_{12}W\bigr).
\end{split}
\eeno  It is easy to observe that \beno
\begin{split}
\int_{\R^2_+}&(u^z)^2\bigl(\sin^2\th\cR_{11}W+\cos^2\th\cR_{22}W-2\sin\th\cos\th\cR_{12}W\bigr)^2\varphi(r)r\,dr\,dz\\
\lesssim
&\Bigl(\int_{\R^3}|u^z|^3r^{-\f32}\,dx\Bigr)^{\f23}\|\p_z\D^{-1}\Ga\|_{L^6}^2
\lesssim  \|\na u^z\|_{L^2}^2\|\Ga\|_{L^2}^2,
\end{split}
\eeno and it follows from a similar derivation of \eqref{eq2.8b}
that \beno
\begin{split}
\int_{\R^2_+}&(u^z)^2\bigl(\sin^2\th\p_r\cR_{11}W+\cos^2\th\p_r\cR_{22}W-2\sin\th\cos\th\p_r\cR_{12}W)^2\varphi(r)r^3\,dr\,dz\\
\leq &
\Bigl(\int_{\R^2_+}|u^z|^4\varphi(r)r^{3}\,dr\,dz\Bigr)^{\f12}\\
&\quad\times \Bigl(\int_{R^2_+}
\bigl(\sin^2\th\p_r\cR_{11}W+\cos^2\th\p_r\cR_{22}W-2\sin\th\cos\th\p_r\cR_{12}W\bigr)^4\varphi(r)r^{3}\,dr\,dz\Bigr)^{\f12}\\
\leq & C_\d\|u^z\|_{L^2}^2\bigl(1+\|u^z\|_{L^2}^4\bigr)\|\wt{\na}
u^z\|_{L^2}^2\|\Ga\|_{L^2}^2+\d\bigl(\|\Ga\|_{L^2}^{2}+\|\wt{\na}\Ga\|_{L^2}^{2}\bigr)..
\end{split}
\eeno By resuming the above estimates into \eqref{eq2.8c}, we obtain
\beq\label{eq2.8d}
\int_{\R^2_+}(u^z\p_zu^z)^2\varphi(r)r\,dr\,dz\leq
C_\d\bigl(1+\|u^z\|_{L^2}^6\bigr)\|\wt{\na}u^z\|_{L^2}^2\|\Ga\|_{L^2}^2+
\d\bigl(\|\Ga\|_{L^2}^2+\|\wt{\na}\Ga\|_{L^2}^2\bigr). \eeq
 Therefore, by substituting the Estimates \eqref{eq2.7ad}, \eqref{eq2.8bd} and \eqref{eq2.8d} into \eqref{eq2.7},
  we obtain \beq\label{eq2.9} \begin{split}
 \|u^z\p_z
u\|_{L^2}^2\leq &
C_\d\Bigl(\bigl(1+\|u^z\|_{L^2}^6\bigr)\|\wt{\na}u^z\|_{L^2}^2\bigl(\|\wt{\na}u\|_{L^2}^2+\|\Ga\|_{L^2}^2\bigr)\\
&+(1+\|u^z\|_{L^2}^4)\|\p_zu\|_{L^2}^2\Bigr)+\d\bigl(\|\Ga\|_{L^2}^2+\|\wt{\na}\p_zu\|_{L^2}^2+\|\wt{\na}\Ga\|_{L^2}^2\bigr).
\end{split} \eeq

Note that for the axisymmetric flow, we have for $1<q<\infty$ \beq
\label{eq2.11}
\begin{aligned}
&(i)\quad\ \Vert\omega\Vert_{L^q}\quad \approx\quad
\Vert\nabla u\Vert_{L^q} \andf\\
&(ii)\quad \Vert\nabla\omega\Vert_{L^q} +
\bigl\|\frac{\omega}{r}\bigr\|_{L^q}\quad \approx \quad \|
\nabla^2u\|_{L^q}.
\end{aligned}
\eeq Thanks to \eqref{eq2.11}, by resuming the Estimates
\eqref{eq2.6} and \eqref{eq2.9} into \eqref{eq2.5} and taking $\d$
to be sufficiently small, we obtain \beq\label{eq2.10}
\begin{split}
\f{d}{dt}\Bigl(\|&\wt{\na}u(t)\|_{L^2}^2+\bigl\|\f{u^r(t)}{r}\bigr\|_{L^2}\Bigr)
+\|\p_tu\|_{L^2}^2+\|u\|_{\dot{H}^2}^2+\|\Ga\|_{L^2}^2+\|\wt{\na}\Pi\|_{L^2}^2\\
 \leq &
C_\d\Bigl((1+\|u\|_{L^2}^6)\bigl(\|\wt{\na}u\|_{L^2}^2+\bigl\|\f{u^r}r\bigr\|_{L^2}^2\bigr)
\bigl(\|\wt{\na}u\|_{L^2}^2+\|\Ga\|_{L^2}^2\bigr)\\
&\qquad\qquad\qquad\qquad\qquad\qquad\qquad\qquad+(1+\|u^z\|_{L^2}^4)\|\wt{\na}u\|_{L^2}^2\Bigr)+\d\|\wt{\na}\Ga\|_{L^2}^2.
\end{split}
\eeq  By applying Gronwall's inequality to \eqref{eq2.10}, we write
\beno
\begin{split}
\|&\na
u\|_{L^\infty_t(L^2)}^2+\bigl\|\f{u^r}r\bigr\|_{L^\infty_t(L^2)}^2+\|\p_tu\|_{L^2_t(L^2)}^2+\|
u\|_{L^2_t(\dot{H}^2)}^2+\|\Ga\|_{L^2_t(L^2)}^2+\|\na\Pi\|_{L^2_t(L^2)}^2\\
&\leq C\exp\Bigl(C\bigl(1+\|u\|_{L^\infty_t(L^2)}^6\bigr)\bigl(\|\na
u\|_{L^2_t(L^2)}^2+\bigl\|\f{u^r}r\bigr\|_{L^2_t(L^2)}^2\bigr)\Bigr)\\
&\  \times \Bigl(\|\na
u_0\|_{L^2}^2+\bigl\|\f{u_0^r}{r}\bigr\|_{L^2}^2+\bigl(1+\|u^z\|_{L^\infty_t(L^2)}^4\bigr)\|\wt{\na}
u\|_{L^2_t(L^2)}^2+\|\Ga\|_{L^\infty_t(L^2)}^2+\|\wt{\na}\Ga\|_{L^2_t(L^2)}^2\Bigr),
\end{split}
\eeno from which and \eqref{eq2.1}, we infer \beq\label{eq2.12}
\begin{split}
\|\na &
u\|_{L^\infty_t(L^2)}^2+\bigl\|\f{u^r}r\bigr\|_{L^\infty_t(L^2)}^2+\|\p_tu\|_{L^2_t(L^2)}^2+\|
u\|_{L^2_t(\dot{H}^2)}^2+\|\Ga\|_{L^2_t(L^2)}^2+\|\na\Pi\|_{L^2_t(L^2)}^2\\
\leq &C
\exp\Bigl(C\|u_0\|_{L^2}^2\bigl(1+\|u_0\|_{L^2}^6\bigr)\Bigr)\bigl(\|
u_0\|_{H^1}^2+\bigl\|\f{u_0^r}{r}\bigr\|_{L^2}^2+\|\Ga\|_{L^\infty_t(L^2)}^2+\|\na\Ga\|_{L^2_t(L^2)}^2\bigr).
\end{split}
\eeq

\no $\bullet$ \underline{The estimate of $\Ga$}

Let $a\eqdefa 1/\rho-1.$ Then we get, by taking $L^2$ inner product
of \eqref{eq1.7} with $\Ga$ and using integrating by parts, that
\beno
\begin{split}
\f12\f{d}{dt}&\|\Ga(t)\|_{L^2}^2+\int_{\R^2_+}\f1\r|\wt{\na}\Ga|^2r\,dr\,dz-2\int_{\R^2_+}\p_r\Bigl(\f{\Ga}\r\Bigr)\Ga\,dr\,dz\\
=&\int_{\R^2_+}a\bigl(\p_r\Pi\p_z\Ga-\p_z\Pi\p_r\Ga\bigr)\,dr\,dz\\
\leq
&\bigl\|\f{a}r\bigr\|_{L^\infty}\|\wt{\na}\Pi\|_{L^2}\|\wt{\na}\Ga\|_{L^2}.
\end{split}
\eeno Note that $a(t,0,z)=0,$ by using integration by parts, one has
\beno
\begin{split}
-2\int_{\R^2_+}\p_r\Bigl(\f{\Ga}\r\Bigr)\Ga\,dr\,dz=&-2\int_{\R^2_+}\p_r\Ga
\Ga\,dr\,dz-2\int_{\R^2_+}\p_r(a\Ga) \Ga\,dr\,dz\\
=&\int_{\R}\Ga^2(t,0,z)\,dz+2\int_{\R^2_+}a\Ga\p_r\Ga\,dr\,dz\\
\geq
&-C\bigl\|\f{a}r\bigr\|_{L^\infty}^2\|\Ga\|_{L^2}^2-\f14\bigl\|\f{\p_r\Ga}{\sqrt{\r}}\bigr\|_{L^2}^2.
\end{split}
\eeno Therefore due to \eqref{eq2.1a},  we infer \beq\label{eq2.14}
\begin{split}
\f{d}{dt}&\|\Ga(t)\|_{L^2}^2+\f1m\|\wt{\na}\Ga\|_{L^2}^2\leq
C\bigl\|\f{a}r\bigr\|_{L^\infty}^2\bigl(\|\wt{\na}\Pi\|_{L^2}^2+\|\Ga\|_{L^2}^2\bigr).
\end{split}
\eeq

On the other hand, it follows from the transport equation of
\eqref{eq1.3} that \beno &&\p_ta+u^r\p_ra+u^z\p_za=0\andf\\
&&\p_t\f{a}r+u^r\p_r\f{a}r+u^z\p_z\f{a}r+\f{u^r}r\f{a}r=0, \eeno
which yields \beq\label{eq2.14a}
\bigl\|\frac{a}{r}(t)\bigr\|_{L^\infty} \le
\bigl\|\frac{a_0}{r}\bigr\|_{L^\infty}\exp\Bigl(\bigl\|\frac{u^r}{r}\bigr\|_{L^1_t(L^\infty)}\Bigr).
\eeq
While note from \cite{AHK, Danchin} that
$$
\bigl\|\frac{u^r}{r}\bigr\|_{L^1_t(L^\infty)} \lesssim
\|\Ga\|_{L^1_t(L^{3,1})} \lesssim t^{\f34}
\|\Ga\|_{L^\infty_t(L^2)}^{\f12}\|\nabla\Ga\|_{L^2_t(L^2)}^{\f12}.
$$
So that by integrating \eqref{eq2.14} over $[0,t],$  we obtain \beno
\begin{split}
\|\Ga&\|_{L^\infty_t(L^2)}^2+\|\na\Ga\|_{L^2_t(L^2)}^2\leq
\|\Ga_0\|_{L^2}^2\\
&+C\bigl\|\f{a_0}r\bigr\|_{L^\infty}^2\exp\Bigl(Ct^{\f34}\|\Ga\|_{L^\infty_t(L^2)}^{\f12}\|\nabla\Ga\|_{L^2_t(L^2)}^{\f12}\Bigr)
\bigl(\|\na\Pi\|_{L^2_t(L^2)}^2+\|\Ga\|_{L^2_t(L^2)}^2\bigr).
\end{split}
\eeno Resuming the Estimate \eqref{eq2.12} into the above inequality
leads to \beq\label{eq2.15}
\begin{split}
\|\Ga&\|_{L^\infty_t(L^2)}^2+\|\na\Ga\|_{L^2_t(L^2)}^2\leq
\|\Ga_0\|_{L^2}^2+C\bigl\|\f{a_0}r\bigr\|_{L^\infty}^2\exp\Bigl(Ct^{\f34}\|\Ga\|_{L^\infty_t(L^2)}^{\f12}\|\nabla\Ga\|_{L^2_t(L^2)}^{\f12}\Bigr)\\
&\quad\times
\exp\Bigl(C\|u_0\|_{L^2}^2\bigl(1+\|u_0\|_{L^2}^6\bigr)\Bigr)\bigl(\|
u_0\|_{H^1}^2+\bigl\|\f{u_0^r}{r}\bigr\|_{L^2}^2+\|\Ga\|_{L^\infty_t(L^2)}^2+\|\na\Ga\|_{L^2_t(L^2)}^2\bigr).
\end{split}
\eeq

\begin{prop}\label{prop2.1}
{\sl Let $(\r, u, \na\Pi)$ be a smooth enough solution of
\eqref{eq1.3} on $[0,T^\ast),$ which satisfies \eqref{eq2.1a}. Let
$\cG_0$ be given by \eqref{eq1.14} and
\beq\label{eq2.16}\begin{split} t_1\eqdefa &
\biggl(\f1{2C\|\Ga_0\|_{L^2}}\ln\Bigl(\f{\|\Ga_0\|_{L^2}^2}{2C\bigl\|\f{a_0}r\bigr\|_{L^\infty}^2\cG_0}\Bigr)\biggr)^{\f43}.
\end{split}
\eeq Then under the assumption of \eqref{eq1.8},  one has
$T^\ast\geq t_1$ and there holds \ben &&\label{eq2.17}
\|\Ga\|_{L^\infty_{t_1}(L^2)}^2+\|\na\Ga\|_{L^2_{t_1}(L^2)}^2\leq 2
\|\Ga_0\|_{L^2}^2, \\
&& \label{eq2.18} \|\na
u\|_{L^\infty_{t_1}(L^2)}^2+\bigl\|\f{u^r}r\bigr\|_{L^\infty_{t_1}(L^2)}^2+\|\p_tu\|_{L^2_{t_1}(L^2)}^2+\|
u\|_{L^2_{t_1}(\dot{H}^2)}^2+\|\na\Pi\|_{L^2_t(L^2)}^2\leq C\cG_0.
\een
  }\end{prop}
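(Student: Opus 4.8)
The plan is to run a continuity (bootstrap) argument on the functional
$$X(t)\eqdefa \|\Ga\|_{L^\infty_t(L^2)}^2+\|\na\Ga\|_{L^2_t(L^2)}^2,$$
taking \eqref{eq2.15} as the closed inequality for $X$ and exploiting the definition \eqref{eq2.16} of $t_1$ to absorb the exponential factor. First I would introduce
$$T\eqdefa\sup\bigl\{t\in[0,\min(T^\ast,t_1))\ :\ X(t)\leq 2\|\Ga_0\|_{L^2}^2\bigr\}.$$
Since $X$ is continuous and $X(0)=\|\Ga_0\|_{L^2}^2<2\|\Ga_0\|_{L^2}^2$, we have $T>0$, and it suffices to show that this bound improves strictly on $[0,T]$.

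On $[0,T]$ the constraint $X(t)\leq 2\|\Ga_0\|_{L^2}^2$ gives $\|\Ga\|_{L^\infty_t(L^2)}\leq\sqrt2\,\|\Ga_0\|_{L^2}$ and $\|\na\Ga\|_{L^2_t(L^2)}\leq\sqrt2\,\|\Ga_0\|_{L^2}$, so for $t\leq t_1$ the exponent appearing in \eqref{eq2.15} obeys
$$Ct^{\f34}\|\Ga\|_{L^\infty_t(L^2)}^{\f12}\|\na\Ga\|_{L^2_t(L^2)}^{\f12}\leq \sqrt2\,C\|\Ga_0\|_{L^2}\,t_1^{\f34}.$$
By the choice \eqref{eq2.16} one has $2C\|\Ga_0\|_{L^2}\,t_1^{\f34}=\ln\bigl(\|\Ga_0\|_{L^2}^2/(2C\|\f{a_0}r\|_{L^\infty}^2\cG_0)\bigr)$, and it is exactly the smallness \eqref{eq1.8} that forces the argument of this logarithm to exceed $1$ (hence $t_1>0$). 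Since $\sqrt2<2$ and that argument is $>1$, the exponential factor in \eqref{eq2.15} is bounded above by $\|\Ga_0\|_{L^2}^2/(2C\|\f{a_0}r\|_{L^\infty}^2\cG_0)$.

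Next I would dispose of the polynomial factor in \eqref{eq2.15}: using $X(t)\leq 2\|\Ga_0\|_{L^2}^2$ once more together with the definition \eqref{eq1.14} of $\cG_0$,
$$\exp\bigl(C\|u_0\|_{L^2}^2(1+\|u_0\|_{L^2}^6)\bigr)\bigl(\|u_0\|_{H^1}^2+\bigl\|\tfrac{u_0^r}r\bigr\|_{L^2}^2+X(t)\bigr)\leq\cG_0.$$
Multiplying the two bounds, the whole perturbation term in \eqref{eq2.15} is at most $C\|\f{a_0}r\|_{L^\infty}^2\cdot\frac{\|\Ga_0\|_{L^2}^2}{2C\|\f{a_0}r\|_{L^\infty}^2\cG_0}\cdot\cG_0=\tfrac12\|\Ga_0\|_{L^2}^2$, whence $X(t)\leq\tfrac32\|\Ga_0\|_{L^2}^2$ on $[0,T]$. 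This strict improvement over the threshold $2\|\Ga_0\|_{L^2}^2$ shows, by continuity of $X$, that $T=\min(T^\ast,t_1)$, so \eqref{eq2.17} holds on $[0,\min(T^\ast,t_1))$.

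It remains to upgrade these a priori bounds. The estimate just obtained is valid on $[0,\min(T^\ast,t_1))$; feeding it into \eqref{eq2.14a} and using $\|\f{u^r}r\|_{L^1_t(L^\infty)}\lesssim t^{\f34}\|\Ga\|_{L^\infty_t(L^2)}^{\f12}\|\na\Ga\|_{L^2_t(L^2)}^{\f12}$ shows that $\|\f{a}r\|_{L^\infty}$ stays bounded on that interval. Were $T^\ast<t_1$, this would contradict the blow-up criterion \eqref{eq4.6}, so necessarily $T^\ast\geq t_1$ and \eqref{eq2.17} holds up to $t_1$. Finally, substituting \eqref{eq2.17} into the global $H^1$ estimate \eqref{eq2.12} replaces $\|\Ga\|_{L^\infty_{t_1}(L^2)}^2+\|\na\Ga\|_{L^2_{t_1}(L^2)}^2$ by $2\|\Ga_0\|_{L^2}^2$, turning its right-hand side into exactly $C\cG_0$, which is \eqref{eq2.18}. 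The one genuinely delicate point is the constant bookkeeping in the two middle steps: one must check that the constant $C$ in the exponent of \eqref{eq2.15} coincides with the one frozen in the definition \eqref{eq2.16} of $t_1$, so that the logarithm cancels the exponential; the slack $\sqrt2<2$ is precisely what lets the bootstrap close with the comfortable margin $\tfrac32<2$.
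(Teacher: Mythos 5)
Your proof is correct, and its core --- the continuity argument on $X(t)$ that closes at $\f32\|\Ga_0\|_{L^2}^2$ by playing the exponent in \eqref{eq2.15} against the logarithm frozen into the definition \eqref{eq2.16} of $t_1$, then feeding the resulting bound on $\Ga$ into \eqref{eq2.12} to obtain \eqref{eq2.18} --- is exactly the argument the paper leaves implicit behind the sentence ``we deduce from \eqref{eq2.15} and \eqref{eq2.16} that $\|\Ga\|_{L^\infty_{t_1}(L^2)}^2+\|\na\Ga\|_{L^2_{t_1}(L^2)}^2\leq \f32\|\Ga_0\|_{L^2}^2$''; your constant bookkeeping (the $\sqrt2<2$ slack, and the factor $2\|\Ga_0\|_{L^2}^2$ deliberately built into $\cG_0$ in \eqref{eq1.14} so that the bootstrap hypothesis reproduces $\cG_0$ exactly) is right, and the caveat you flag about using a single constant $C$ in \eqref{eq2.15} and \eqref{eq2.16} is the correct one to flag. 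The only genuine divergence is the final step $T^\ast\geq t_1$: the paper gets it by combining the bound \eqref{eq2.18} with the blow-up criterion of Kim \cite{kim06} for the inhomogeneous Navier--Stokes system, whereas you bound $\|a/r\|_{L^\infty}$ through \eqref{eq2.14a} and appeal to the paper's own criterion \eqref{eq4.6}. Your route is not circular --- the proof of \eqref{eq4.6} given in Section 4 uses only \eqref{eq2.10}, \eqref{eq2.14}, \eqref{eq2.1} and Gronwall, never Proposition \ref{prop2.1} --- but it does introduce a forward reference to a statement proved later in the paper, so adopting it would require reordering so that the criterion \eqref{eq4.6} is established before this proposition. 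The paper's appeal to \cite{kim06} avoids that bookkeeping and uses directly the $H^1$-level bound \eqref{eq2.18} you have just produced, which is why it is the more economical closing step; your version has the mild virtue of keeping the whole argument internal to quantities ($a/r$, $\Ga$) already controlled in this section.
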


\begin{proof}
Indeed if  $\bigl\|\f{a_0}r\bigr\|_{L^\infty}$ is sufficiently
small, we deduce from \eqref{eq2.15} and \eqref{eq2.16} that \beno
\|\Ga\|_{L^\infty_{t_1}(L^2)}^2+\|\na\Ga\|_{L^2_{t_1}(L^2)}^2\leq
\f32 \|\Ga_0\|_{L^2}^2. \eeno Substituting the above estimate into
 \eqref{eq2.12} gives rise to
\eqref{eq2.18}.  \eqref{eq2.18} together with the blow-up criteria
in \cite{kim06} implies that  $T^\ast\geq t_1.$
\end{proof}

\subsection{The global in time $H^1$ estimate} The goal of this
subsection is to present the global in time $H^1$ estimate for the
velocity field. Toward this, we first prove such a estimate for
small solutions of \eqref{eq1.1}, which does not use the
axisymmetric structure of the solutions.

\begin{lem}
\label{lem2.1} {\sl Let $(\r, u, \na\Pi)$ be a smooth enough
solution of \eqref{eq1.1} on $[0,T^\ast),$ which satisfies
\eqref{eq2.1a}.  Then there exist positive constants $\eta_1$  and
$\eta_2,$ which depend only on $\|u_0\|_{L^2},$ so that there holds
\begin{equation}\label{eq2.20}
\|\na u(t)\|^2_{L^2}+\int_{t_0}^{t}\bigl(m\|\partial_tu(t')\|_{
L^2}^2 +\eta_2\bigl(\|\na^2 u(t')\|_{
L^2}^2+\|\na\Pi(t')\|_{L^2}^2\bigr)\bigr)\,dt' \leq \|\na
u(t_0)\|_{L^2}^2
\end{equation} provided that
$\|\na u(t_0)\|_{L^2}\leq \eta_1.$ }
\end{lem}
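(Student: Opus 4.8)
The plan is to establish a differential inequality for $\|\na u(t)\|_{L^2}^2$ that closes under the smallness hypothesis $\|\na u(t_0)\|_{L^2}\le\eta_1$. Since this lemma concerns general (not necessarily axisymmetric) solutions of \eqref{eq1.1}, I would work directly in Cartesian coordinates. First I would take the $L^2(\R^3)$ inner product of the momentum equation $\r\p_tu+\r u\cdot\na u-\D u+\na\Pi=0$ with $\p_tu$. Integration by parts on the $-\D u$ term produces $\f12\f{d}{dt}\|\na u\|_{L^2}^2$, while the pressure term drops out by $\dive u=0$, and using \eqref{eq2.1a} the density-weighted term gives a coercive $m\|\p_tu\|_{L^2}^2$. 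This yields
\beq\label{eq:plan1}
\f12\f{d}{dt}\|\na u\|_{L^2}^2+m\|\p_tu\|_{L^2}^2\le \int_{\R^3}\r\,|u\cdot\na u|\,|\p_tu|\,dx.\eeq

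Next I would control the nonlinear term and convert it into second-derivative and pressure norms. The key estimate is the elliptic regularity bound for the inhomogeneous Stokes-type system: treating $-\D u+\na\Pi=-\r\p_tu-\r u\cdot\na u=:F$ with $\dive u=0$, standard theory (together with \eqref{eq2.1a} controlling the density from above and below) gives $\|\na^2u\|_{L^2}^2+\|\na\Pi\|_{L^2}^2\lesssim \|F\|_{L^2}^2\lesssim \|\p_tu\|_{L^2}^2+\|u\cdot\na u\|_{L^2}^2$. For the convective term I would use the Gagliardo-Nirenberg/Ladyzhenskaya inequality in three dimensions, $\|u\cdot\na u\|_{L^2}\lesssim \|u\|_{L^6}\|\na u\|_{L^3}\lesssim \|\na u\|_{L^2}^{3/2}\|\na^2u\|_{L^2}^{1/2}$, so that by Young's inequality
\beq\label{eq:plan2}
\|u\cdot\na u\|_{L^2}^2\le \eta\,\|\na^2u\|_{L^2}^2+C_\eta\|\na u\|_{L^2}^6.\eeq
Absorbing the $\eta\|\na^2u\|_{L^2}^2$ term into the left-hand side (using the elliptic estimate above) and bounding the right side of \eqref{eq:plan1} via $\int\r|u\cdot\na u||\p_tu|\le \f{m}2\|\p_tu\|_{L^2}^2+C\|u\cdot\na u\|_{L^2}^2$, I would arrive at an inequality of the form
\beq\label{eq:plan3}
\f{d}{dt}\|\na u\|_{L^2}^2+\f{m}{2}\|\p_tu\|_{L^2}^2+\eta_2\bigl(\|\na^2u\|_{L^2}^2+\|\na\Pi\|_{L^2}^2\bigr)\le C\|\na u\|_{L^2}^4\bigl(\|\na^2u\|_{L^2}^2+\|\na u\|_{L^2}^2\bigr),\eeq
where the factor $\|\na u\|_{L^2}^4$ comes from distributing the sextic term.

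The crux is then a \emph{continuation/bootstrap} argument: I would choose $\eta_1$ so small (depending only on the absorbing constants, which in turn depend only on $m$ and the Sobolev constants, hence only on $\|u_0\|_{L^2}$ through \eqref{eq2.1a}) that whenever $\|\na u(t)\|_{L^2}^2\le \eta_1^2$ the coefficient $C\|\na u\|_{L^2}^4$ on the right of \eqref{eq:plan3} is dominated by $\eta_2$ times the dissipation. Concretely, if $C\eta_1^4\le \eta_2$, the right-hand side is absorbed and the time-derivative of $\|\na u\|_{L^2}^2$ becomes nonpositive, forcing $\|\na u(t)\|_{L^2}$ to be nonincreasing and in particular to remain below the threshold on $[t_0,t]$. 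A standard continuity argument starting from $\|\na u(t_0)\|_{L^2}\le\eta_1$ then shows the smallness propagates, and integrating the resulting inequality in time yields \eqref{eq2.20}. The main obstacle I anticipate is making the absorption of the nonlinear and second-order terms fully rigorous — one must verify that the elliptic estimate constant and the interpolation constant can be packaged so that a single smallness threshold $\eta_1$ depending only on $\|u_0\|_{L^2}$ (via $m,M$) suffices, and that the bootstrap is not circular. I would handle this by fixing $\eta_2$ first from the elliptic/Young step and then defining $\eta_1$ from $\eta_2$ and $C$, so the dependence chain is clean.
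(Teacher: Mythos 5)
Your first two steps (testing the momentum equation with $\p_tu$, and the Stokes estimate for $-\D u+\na\Pi=-\r\p_tu-\r u\cdot\na u$) are exactly the paper's. The gap is in your treatment of the convection term. You estimate $\|u\cdot\na u\|_{L^2}\lesssim\|u\|_{L^6}\|\na u\|_{L^3}\lesssim\|\na u\|_{L^2}^{3/2}\|\na^2u\|_{L^2}^{1/2}$ and then apply Young's inequality, which leaves the term $C_\eta\|\na u\|_{L^2}^6$ on the right-hand side. That term cannot be absorbed: the dissipation on your left-hand side consists of $\|\p_tu\|_{L^2}^2$, $\|\na^2u\|_{L^2}^2$ and $\|\na\Pi\|_{L^2}^2$ only --- there is no positive multiple of $\|\na u\|_{L^2}^2$ available. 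Consequently, under your bootstrap hypothesis $C\|\na u\|_{L^2}^4\le\eta_2$, the best you obtain is $\f{d}{dt}\|\na u(t)\|_{L^2}^2\le\eta_2\|\na u(t)\|_{L^2}^2$, which by Gronwall permits exponential growth; the derivative is \emph{not} nonpositive, the smallness need not propagate on all of $[t_0,T^\ast)$ (which may be infinite), and even where it does hold you would obtain \eqref{eq2.20} with an extra positive term on the right rather than the exact inequality claimed.

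The fix is precisely the point where the paper is careful: estimate the trilinear term as $\|u\|_{L^3}\|\na u\|_{L^6}\lesssim\|u\|_{L^2}^{\f12}\|\na u\|_{L^2}^{\f12}\|\na^2u\|_{L^2}$ --- that is, put the interpolation weight on $u$ rather than on $\na u$ --- so that, after Young and the Stokes estimate, the \emph{entire} nonlinear contribution takes the form $C\|u\|_{L^2}\|\na u\|_{L^2}\|\na^2u\|_{L^2}^2$. Since $\|u(t)\|_{L^2}\le C\|u_0\|_{L^2}$ by the energy estimate \eqref{eq2.1}, this is a small coefficient times the dissipation itself, and once $\|\na u(t)\|_{L^2}\le2\eta_1$ with $\eta_1\lesssim\eta_2/\|u_0\|_{L^2}$ it is absorbed completely, leaving $\f{d}{dt}\|\na u\|_{L^2}^2+m\|\p_tu\|_{L^2}^2+\eta_2\bigl(\|\na^2u\|_{L^2}^2+\|\na\Pi\|_{L^2}^2\bigr)\le0$ with no remainder; your continuity argument and integration then give exactly \eqref{eq2.20}. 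Note this is also why $\eta_1$ genuinely depends on $\|u_0\|_{L^2}$ (inversely), a dependence your threshold $C\eta_1^4\le\eta_2$ does not see. Alternatively, your version can be patched by invoking $\int_0^\infty\|\na u\|_{L^2}^2\,dt\le C\|u_0\|_{L^2}^2$ from \eqref{eq2.1} to control the time integral of the leftover sextic term and close the bootstrap, but this only yields \eqref{eq2.20} with an additional constant of size $C\eta_1^4\|u_0\|_{L^2}^2$ on the right, which is weaker than the lemma as stated.
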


\begin{proof} We first get, by
taking the  $L^2$ inner product of the momentum equations of
\eqref{eq1.1} with $\pa_t u$ and using integration by parts, that
\begin{equation*}
\begin{split}
\|\sqrt{\rho} \pa_t u(t)\|_{L^2}^2 +\frac{1}{2}\frac{d}{dt}\|\grad
u(t)\|^2_{L^2}=&-\bigl(\rho u \cdot \grad u\ |\ \pa_t u \bigr)_{L^2}\\
\leq & \|\sqrt{\rho}\|_{L^{\infty}}\|u\|_{L^3}\|\grad
u\|_{L^6}\|\sqrt{\rho}\pa_t u\|_{L^2}\\
 \leq & C\|u\|_{L^{2}}\|\grad u\|_{L^2}\|\grad^2
u\|_{L^2}^2+\frac{1}{4}\|\sqrt{\rho}\pa_t u\|_{L^2}^2,
\end{split}
\end{equation*}
which gives
\begin{equation*}
\f12\frac{d}{dt}\|\grad u(t)\|_{L^2}^2+\f34\|\sqrt{\rho}\pa_t
u(t)\|_{L^2}^2 \leq C \|u\|_{L^{2}}\|\grad u\|_{L^2}\|\grad^2
u\|_{L^2}^2.
\end{equation*}
On the other hand, it follows from the classical estimates on linear
Stokes operator and \beq\label{eq2.20a} \left\{\begin{array}{l}
\displaystyle  - \Delta u+ \grad\Pi=\r\pa_t u - \rho u \cdot\na u, \\
\displaystyle \dv\, u = 0,
\end{array}\right.
\eeq that \beno
\begin{split}
\|\na^2 u\|_{L^2}^2+\|\na\Pi\|_{L^2}^2\leq &
C\bigl(\|\r\p_tu\|_{L^2}^2+\|\r u\cdot\na u\|_{L^2}^2\bigr)\\
\leq&C\bigl(\|\sqrt{\r}\p_tu\|_{L^2}^2+\|\r\|_{L^\infty}\|u\|_{L^3}^2\|\na
u\|_{L^6}^2\bigr)\\
\leq &C\bigl(\|\sqrt{\r}\p_tu\|_{L^2}^2+\|u\|_{L^2}\|\na
u\|_{L^2}\|\na^2 u\|_{L^2}^2\bigr),
\end{split}
\eeno so that we obtain for any $\eta_2>0$ \beq\label{eq2.21}
\begin{split}\f12\frac{d}{dt}\|\grad u(t)\|_{L^2}^2+&\bigl(\f{3m}4-C\eta_2\bigr)\|\pa_t
u\|_{L^2}^2 \\
+&\bigl(\eta_2-C\|u_0\|_{L^2}\|\na u\|_{L^2}\bigr)\bigl(\|\na^2
u\|_{L^2}^2+\|\na\Pi\|_{L^2}^2\bigr)\leq 0. \end{split} \eeq

We denote
\begin{equation}\label{eq2.22}
 \tau^*\eqdefa\sup\bigl\{\ t\in [t_0, T^\ast)\, \big|\, \|\na u(t)\|_{L^2}\leq 2\eta_1\ \bigr\}.
\end{equation}
We claim that $\tau^\ast=T^\ast$ provided that $\eta_1$ is
sufficiently small. Indeed if $\tau^\ast<T^\ast,$ taking
$\eta_2=\f{m}{4C}$ and $\eta_1\leq \f{\eta_2}{2C\|u_0\|_{L^2}},$ we
deduce from \eqref{eq2.21} that
\begin{equation*}\label{eq2.19}
 \frac{d}{dt}\|\grad u(t)\|_{L^2}^2+m\|\pa_t u\|_{L^2}^2+
\eta_2\bigl(\|\grad^2 u\|_{L^2}^2+\|\na\Pi\|_{L^2}^2\bigr) \leq
0\qquad \mbox{for all}\  \, t\in [t_0, \tau^\ast),
\end{equation*}
 which implies
$$
\|\grad u(t)\|_{L^2}^2+\int_{t_0}^{\tau^\ast}\bigl(m\|\pa_t
u(t')\|_{L^2}^2+
 \eta_2(\|\grad^2 u(t')\|_{L^2}^2+\|\na\Pi(t')\|_{L^2}^2)\bigr)\,dt'\leq \|\grad
 u(t_0)\|_{L^2}^2\leq \eta^2_1.
 $$
 This contradict with \eqref{eq2.22}, and thus $\tau^\ast=T^\ast.$
 This
 concludes the proof of the lemma.
\end{proof}

\begin{prop}\label{prop2.2}
{\sl Let $(\r, u, \na\Pi)$ be the local unique smooth solution of
\eqref{eq1.3} on $[0,T^\ast),$ which satisfies \eqref{eq2.1a}. Then
$T^\ast=\infty$ and there holds \eqref{eq1.14} provided that $\e_0$
in \eqref{eq1.8} is sufficiently small. }\end{prop}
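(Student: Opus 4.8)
The plan is to concatenate the short-time control of Proposition \ref{prop2.1} on $[0,t_1]$ with the global-in-time propagation of smallness furnished by the global $H^1$ estimate (Lemma \ref{lem2.1}). The bridge between the two is the observation that the basic energy identity \eqref{eq2.1} forces $\|\na u(t)\|_{L^2}$ to dip below the threshold $\eta_1$ of Lemma \ref{lem2.1} at some time inside $[0,t_1]$, as soon as $t_1$ is large; and largeness of $t_1$ is exactly what the smallness of $\e_0$ buys.

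First I would record that $t_1\to+\infty$ as $\e_0\to0$. Indeed, keeping the initial data fixed, the quantity $\cG_0$ in \eqref{eq1.14} does not involve $\bigl\|\f{a_0}r\bigr\|_{L^\infty}$, so in formula \eqref{eq2.16} the argument of the logarithm tends to $+\infty$ as $\e_0\to0$, whence so does $t_1$. I therefore fix $\e_0$ so small that $t_1\geq 2C\|u_0\|_{L^2}^2/\eta_1^2$, where $C$ is the constant in \eqref{eq2.1} and $\eta_1=\eta_1(\|u_0\|_{L^2})$ is the threshold of Lemma \ref{lem2.1}. Since for a swirl-free axisymmetric field one has the pointwise identity $|\na u|^2=|\wt{\na}u^r|^2+|\wt{\na}u^z|^2+(u^r/r)^2$, the globally valid bound \eqref{eq2.1} yields $\int_0^{t_1}\|\na u(t')\|_{L^2}^2\,dt'\leq C\|u_0\|_{L^2}^2$; by the mean value theorem there is a time $t_0\in[t_1/2,t_1]$ with $\|\na u(t_0)\|_{L^2}^2\leq 2C\|u_0\|_{L^2}^2/t_1\leq\eta_1^2$.

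With $\|\na u(t_0)\|_{L^2}\leq\eta_1$ at hand, I apply Lemma \ref{lem2.1} starting from $t_0$: estimate \eqref{eq2.20} then holds and gives, uniformly in $t<T^\ast$,
\beno
\|\na u(t)\|_{L^2}^2+\int_{t_0}^{t}\bigl(m\|\p_tu\|_{L^2}^2+\eta_2(\|\na^2u\|_{L^2}^2+\|\na\Pi\|_{L^2}^2)\bigr)\,dt'\leq\eta_1^2.
\eeno
Combined with the short-time bound \eqref{eq2.18} of Proposition \ref{prop2.1} on $[0,t_0]\subset[0,t_1]$, this controls $\na u$ in $L^\infty(L^2)$ and $\p_tu,\na^2u,\na\Pi$ in $L^2(L^2)$ uniformly on the whole interval $[0,T^\ast)$. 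Invoking the blow-up criterion of \cite{kim06} exactly as in the proof of Proposition \ref{prop2.1}, such uniform bounds rule out a finite blow-up time, and hence $T^\ast=\infty$.

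It then remains to assemble \eqref{eq1.14}. The terms $\|u\|_{L^\infty(\R^+;L^2)}$ and $\|\na u\|_{L^2(\R^+;L^2)}$ are the globally valid energy bound \eqref{eq2.1}, while $\|\na u\|_{L^\infty(\R^+;L^2)}$ and $\|\p_tu\|_{L^2(\R^+;L^2)},\|\na^2u\|_{L^2(\R^+;L^2)},\|\na\Pi\|_{L^2(\R^+;L^2)}$ follow by patching \eqref{eq2.18} (on $[0,t_0]$, where they are bounded by $C\cG_0$) with the uniform estimate just obtained (on $[t_0,\infty)$, where they are bounded by $\eta_1^2\leq C\cG_0$). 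The only remaining quantity, $\bigl\|\f{u^r}r\bigr\|_{L^\infty(\R^+;L^2)}$, needs no separate argument: the same pointwise identity shows $\|u^r/r\|_{L^2(\R^3)}\leq\|\na u\|_{L^2(\R^3)}$ for swirl-free axisymmetric fields, so it is already controlled by $\|\na u\|_{L^\infty(\R^+;L^2)}$. This gives \eqref{eq1.14}, with the additive $1$ absorbing the $\eta_1$-contributions. The main obstacle is the continuation argument — producing a single time $t_0$ at which $\|\na u(t_0)\|_{L^2}$ lies under the fixed threshold $\eta_1$ — which is made possible only by taking $\e_0$ small enough to render $t_1$ large; once such a $t_0$ is found, Lemma \ref{lem2.1} propagates the smallness forward for all time and the rest is bookkeeping.
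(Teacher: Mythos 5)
Your proposal is correct and follows essentially the same route as the paper: both use the dissipation in the energy identity to locate a time $t_0\leq t_1$ at which $\|\na u(t_0)\|_{L^2}$ falls below the threshold $\eta_1$ of Lemma \ref{lem2.1}, then propagate that smallness globally and patch with the bound \eqref{eq2.18} of Proposition \ref{prop2.1} on $[0,t_0]$, concluding via the blow-up criterion of \cite{kim06}. The only (immaterial) difference is bookkeeping: the paper finds $t_0$ by pigeonholing over unit intervals and then shrinks $\e_0$ so that $t_1\geq t_0$, whereas you first shrink $\e_0$ to make $t_1$ large and then average over $[t_1/2,t_1]$.
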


\begin{proof}
It follows from the derivation of \eqref{eq2.1} that
\beq\label{eq2.29} \f12\|\sqrt{\rho}u(t)\|_{L^2}^2+\int_0^t\|\nabla
u(t')\|_{L^2}^2\,dt' =\f12 \|\sqrt{\rho_0}u_0\|_{L^2}^2, \eeq which
ensures that for any positive integer $N,$ there holds
$$
\sum_{k=0}^{N-1}\int_k^{k+1}\|\nabla u(t')\|_{L^2}^2\,d t' \leq \f12
\|\sqrt{\rho_0}u_0\|_{L^2}^2.
$$
Thus there exists $0\le k_0\le N-1$ and some $t_0\in (k_0,k_0+1)$
such that
$$
\int_{k_0}^{k_0+1}\|\nabla u\|_{L^2}^2\,d\tau \leq
\frac{1}{2N}\|\sqrt{\rho_0}u_0\|_{L^2}^2 \andf \|\na
u(t_0)\|_{L^2}^2\leq \frac{1}{2N}\|\sqrt{\rho_0}u_0\|_{L^2}^2.
$$ For $\eta_1$ given by Lemma \ref{lem2.1},
taking $N$ so large that \beno \|\na u(t_0)\|_{L^2}^2\leq
\frac{1}{2N}\|\sqrt{\rho_0}u_0\|_{L^2}^2\leq \eta_1^2.\eeno Then we
deduce from Lemma \ref{lem2.1} that there holds \eqref{eq2.20}.

On the other hand, in view of \eqref{eq2.16}, we can take
$\bigl\|\f{a_0}r\bigr\|_{L^\infty}$ to be so small that $t_1\geq
t_0.$ Thus by summing up \eqref{eq2.18} and \eqref{eq2.20}, we
obtain for any $t<T^\ast,$ \beq\label{eq2.26} \begin{split} \|\na
u&\|_{L^\infty_t(L^2)}^2+\|\p_tu\|_{L^2_t(L^2)}^2+\|\na^2
u\|_{L^2_t(L^2)}^2+\|\na\Pi\|_{L^2_t(L^2)}^2\\
\leq & \|\na
u\|_{L^\infty(0,t_0;L^2)}^2+\|\p_tu\|_{L^2(0,t_0;L^2)}^2+\|\na^2
u\|_{L^2(0,t_0;L^2)}^2+\|\na\Pi\|_{L^2(0,t_0;L^2)}^2\\
&+\|\na
u\|_{L^\infty(t_0,t;L^2)}^2+\|\p_tu\|_{L^2(t_0,t;L^2)}^2+\|\na^2
u\|_{L^2(t_0,t;L^2)}^2+\|\na\Pi\|_{L^2(t_0,t;L^2)}^2\\
\leq &C\cG_0 +\eta_1,
\end{split}
\eeq for $\cG_0$ given by \eqref{eq1.14} and $\eta_1$ being
determined by Lemma \ref{lem2.1}. Then thanks to \eqref{eq2.26} and
the blow-up criteria in \cite{kim06}, we conclude that
$T^\ast=\infty.$ Moreover, by summing up \eqref{eq2.1} and
\eqref{eq2.26}, we achieve \eqref{eq1.14}. This finishes the proof
of Proposition \ref{prop2.2}.
\end{proof}

\renewcommand{\theequation}{\thesection.\arabic{equation}}
\setcounter{equation}{0}
%%%%%%%%%%%%%%%%%%%%%%%%%%%%%%%%%%%%%%%%%%%%%%
%%%%%%%%%%%%%%%%%%%%%%%%%%%%%%%%%%%%%%%%%%
\section{Decay estimates of the global solutions of \eqref{eq1.1}}

The purpose of this section is to present the decay estimates
\eqref{eq4.1} for any global smooth solutions of \eqref{eq1.1},
which does not use the particular axisymmetric structure of the
solutions.

\begin{lem}\label{lem3.2}
{\sl Let $(\r, u, \na\Pi)$ be a smooth enough solution of
\eqref{eq1.1} on $[0,T^\ast),$ which satisfies \eqref{eq2.1a}. Then
for $t<T^\ast,$ one has \beq\label{eq3.10}
\begin{split} \f{d}{dt}\|\na
u(t)\|_{L^2}^2&+\|\sqrt{\r}u_t(t)\|_{L^2}^2+\|u(t)\|_{\dot{H}^2}^2+\|\na\Pi(t)\|_{L^2}^2\leq
C\|\na u(t)\|_{{H}^1}^2\|\na u(t)\|_{L^2}^2,
\end{split}
\eeq and \beq\label{eq3.11} \begin{split}
\f{d}{dt}\|\sqrt{\r}u_t(t)\|_{L^2}^2&+\|\na u_t(t)\|_{L^2}^2\\
& \leq C\bigl(\|\na u(t)\|_{{H}^1}^2+\|u(t)\|_{\dot{H}^1}^4\bigr)
\bigl(\|\sqrt{\r}u_t(t)\|_{L^2}^2+\|\na u(t)\|_{L^2}^4\bigr).
\end{split}
\eeq }
\end{lem}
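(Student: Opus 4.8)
The plan is to derive both differential inequalities by testing the momentum equation of \eqref{eq1.1} against $\p_t u$ and against $u_t$ differentiated in time, closing each estimate with the Stokes regularity bound already used in Lemma~\ref{lem2.1}. For \eqref{eq3.10} I would take the $L^2(\R^3)$ inner product of the momentum equation $\r\p_tu+\r u\cdot\na u-\D u+\na\Pi=0$ with $\p_tu$. Using $\dive u=0$ and integration by parts this yields
\beno
\|\sqrt{\r}\,u_t\|_{L^2}^2+\f12\f{d}{dt}\|\na u\|_{L^2}^2
=-\bigl(\r u\cdot\na u\mid u_t\bigr)_{L^2}.
\eeno
The convection term is handled exactly as in the proof of Lemma~\ref{lem2.1}: by $\|\sqrt\r\|_{L^\infty}\|u\|_{L^3}\|\na u\|_{L^6}\|\sqrt\r u_t\|_{L^2}$, then Young's inequality absorbs $\f14\|\sqrt\r u_t\|_{L^2}^2$, leaving a term controlled by $C\|u\|_{L^2}\|\na u\|_{L^2}\|\na^2u\|_{L^2}^2$. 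Feeding in the Stokes estimate \eqref{eq2.20a}, namely $\|\na^2u\|_{L^2}^2+\|\na\Pi\|_{L^2}^2\lesssim \|\sqrt\r u_t\|_{L^2}^2+\|u\|_{L^2}\|\na u\|_{L^2}\|\na^2u\|_{L^2}^2$, and recognizing $\|u(t)\|_{\dot H^2}\approx\|\na^2u\|_{L^2}$, gives \eqref{eq3.10} with the product structure $\|\na u\|_{H^1}^2\|\na u\|_{L^2}^2$ on the right; the gradient norms on the right are bounded by $\|\na u\|_{H^1}^2$ and the remaining factor by $\|\na u\|_{L^2}^2$ after noting $\|u\|_{L^2}$ stays bounded via \eqref{eq2.1}.

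\textbf{For} \eqref{eq3.11} the strategy is to differentiate the momentum equation in time and test against $u_t$. Writing $\p_t(\r\p_tu)+\p_t(\r u\cdot\na u)-\D u_t+\na\p_t\Pi=0$ and pairing with $u_t$, the principal term $-\bigl(\D u_t\mid u_t\bigr)$ produces $\|\na u_t\|_{L^2}^2$, while the pressure term vanishes by $\dive u_t=0$. The time derivative of the mass term, using the continuity equation $\p_t\r=-\dive(\r u)=-u\cdot\na\r$, contributes $\f12\f{d}{dt}\|\sqrt\r u_t\|_{L^2}^2$ together with a lower-order term involving $\p_t\r\,|u_t|^2$ which is estimated by $\|u\|_{L^\infty}$-type controls converted to $\|\na u\|_{H^1}$ via Sobolev embedding. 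The genuinely delicate part is the time derivative of the convection term $\p_t(\r u\cdot\na u)$, which splits into $\p_t\r\,u\cdot\na u$, $\r u_t\cdot\na u$, and $\r u\cdot\na u_t$. Each must be paired with $u_t$ and estimated by interpolation inequalities (e.g. $\|u_t\|_{L^3}\lesssim\|u_t\|_{L^2}^{1/2}\|\na u_t\|_{L^2}^{1/2}$ or the Gagliardo–Nirenberg bound $\|u_t\|_{L^6}\lesssim\|\na u_t\|_{L^2}$), arranging the top-order factor $\|\na u_t\|_{L^2}$ to appear with a small coefficient that is absorbed on the left.

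\textbf{The main obstacle} I expect is the convection term $\bigl(\r u\cdot\na u_t\mid u_t\bigr)_{L^2}$, which contains the highest derivative $\na u_t$ and cannot be integrated by parts away without cost because of the density weight $\r$. The natural route is $\|\r u\cdot\na u_t\|$ bounded by $\|u\|_{L^6}\|\na u_t\|_{L^2}\|u_t\|_{L^3}$, and then using $\|u_t\|_{L^3}\lesssim\|u_t\|_{L^2}^{1/2}\|\na u_t\|_{L^2}^{1/2}$ together with $\|u\|_{L^6}\lesssim\|\na u\|_{L^2}$ to get a factor $\|\na u\|_{L^2}\|u_t\|_{L^2}^{1/2}\|\na u_t\|_{L^2}^{3/2}$; Young's inequality then peels off a small multiple of $\|\na u_t\|_{L^2}^2$ at the price of a $\|\na u\|_{L^2}^4\|u_t\|_{L^2}^2$ contribution, which fits the right-hand side of \eqref{eq3.11} once $\|u_t\|_{L^2}^2$ is controlled by $\|\sqrt\r u_t\|_{L^2}^2$ using the lower density bound \eqref{eq2.1a}. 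Tracking precisely which terms supply the $\|u\|_{\dot H^1}^4$ factor versus the $\|\na u\|_{H^1}^2$ factor, and verifying that every top-order quantity is absorbable, is the bookkeeping that makes \eqref{eq3.11} come out in the stated clean form.
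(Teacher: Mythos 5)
Your outline for \eqref{eq3.11} is essentially the paper's own argument (differentiate the momentum equation in time, test with $u_t$, estimate the trilinear terms by interpolation, absorb a small multiple of $\|\na u_t\|_{L^2}^2$, and convert the remaining $\|u\|_{\dot{H}^2}$ factors via the Stokes bound \eqref{eq3.15}), and it would go through with the bookkeeping completed; the one point your sketch glosses over is that every occurrence of $\r_t$ must first be rewritten as $-\dive(\r u)$ and integrated by parts, since $\na\r$ is controlled by nothing in this lemma. The genuine gap is in your proof of \eqref{eq3.10}. You estimate the convection term \emph{exactly as in the small-data Lemma \ref{lem2.1}}, via $\|u\|_{L^3}\|\na u\|_{L^6}$, which yields
\beno
\|\sqrt{\r}\,u\cdot\na u\|_{L^2}^2\lesssim \|u\|_{L^2}\|\na u\|_{L^2}\|\na^2u\|_{L^2}^2,
\eeno
a bound that is \emph{quadratic} in the top-order norm $\|\na^2u\|_{L^2}$ with the solution-dependent coefficient $\|u\|_{L^2}\|\na u\|_{L^2}$. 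To place $\|u\|_{\dot{H}^2}^2+\|\na\Pi\|_{L^2}^2$ on the left of \eqref{eq3.10} you must absorb this term into the left-hand side, both in the energy inequality and in the Stokes estimate you quote (which has $\|\na^2u\|_{L^2}^2$ on both sides). That absorption works only when $C\|u\|_{L^2}\|\na u\|_{L^2}<1$; in Lemma \ref{lem2.1} this is exactly what the bootstrap hypothesis $\|\na u(t)\|_{L^2}\le 2\eta_1$ supplies, but Lemma \ref{lem3.2} assumes no smallness at all, so your argument does not close. Moreover, even granting the absorption, your final bound would read $C\|u_0\|_{L^2}\|\na u\|_{L^2}\|\na u\|_{H^1}^2$, with \emph{one} power of $\|\na u\|_{L^2}$ instead of two; this is strictly weaker than \eqref{eq3.10} precisely in the decay regime $\|\na u(t)\|_{L^2}\to 0$ where the lemma is applied, and Corollary \ref{col3.1} would then need $\int_0^\infty\w{t}\|\na u\|_{H^1}^2\,dt<\infty$, which is not available.

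The paper's fix is a different H\"older split: $\|\sqrt{\r}\,u\cdot\na u\|_{L^2}\le \sqrt{M}\,\|u\|_{L^6}\|\na u\|_{L^3}\lesssim \|\na u\|_{L^2}\,\|\na u\|_{\dot{H}^{1/2}}$, whence
\beno
\|\sqrt{\r}\,u\cdot\na u\|_{L^2}^2\lesssim \|\na u\|_{\dot{H}^{1/2}}^2\|\na u\|_{L^2}^2\le \|\na u\|_{H^1}^2\|\na u\|_{L^2}^2.
\eeno
This is already the right-hand side of \eqref{eq3.10}, so no top-order term ever needs to be absorbed: the same quantity bounds the Stokes right-hand side $\|\r u\cdot\na u\|_{L^2}^2$, and \eqref{eq3.10} follows for arbitrary large solutions. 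The same split is what makes \eqref{eq3.15}, namely $\|u\|_{\dot{H}^2}+\|\na\Pi\|_{L^2}\le C\bigl(\|\sqrt{\r}u_t\|_{L^2}+\|\na u\|_{L^2}^2\bigr)$, valid without smallness: there $\|u\|_{\dot{H}^2}$ enters only \emph{linearly}, with a constant coming from Young's inequality rather than a solution-dependent coefficient, so it is absorbable unconditionally. Replace your $L^3\times L^6$ estimate by this one and both halves of your outline match the paper's proof.
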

\begin{proof} We first get, by a similar derivation of
\eqref{eq2.21}, that \beno
\begin{split}
\f{d}{dt}\|\na u(t)\|_{L^2}^2+\bigl(\|\sqrt{\r}u_t\|_{L^2}^2+\|
u\|_{\dot{H}^2}^2+\|\na\Pi\|_{L^2}^2\bigr)\leq &
C\|\sqrt{\r}u\cdot\na
u\|_{L^2}^2\\
\leq & CM\|u\|_{L^6}^2\|\na u\|_{L^3}^2\\
\leq & CM \|\na u\|_{\dot{H}^{\f12}}^2\|\na u\|_{L^2}^2,
\end{split}
\eeno which gives \eqref{eq3.10}.

On the other hand, by taking $\p_t$ to the momentum equation of
\eqref{eq1.1}, we write \beno \r\bigl(\p_t u_t+u\cdot\na
u_t\bigr)-\D u_t+\na\Pi_t=-\r_t u_t-(\r u)_t\cdot\na u. \eeno Taking
$L^2$ inner product of the above equation with $u_t$ and using the
transport equation of \eqref{eq1.1}, we obtain \beq\label{eq3.12}
\f12\f{d}{dt}\|\sqrt{\r}u_t(t)\|_{L^2}^2+\|\na
u_t\|_{L^2}^2=-\int_{\R^3}\r_t|u_t|^2\,dx-\int_{\R^3}\r_t u\cdot\na
u\ |\ u_t\,dx-\int_{\R^3}\r u_t\cdot\na u\ |\ u_t\,dx. \eeq By using
the transport equation of \eqref{eq1.1} and integration by parts,
one has \beno
\begin{split}
\bigl|\int_{\R^3}\r_t|u_t|^2\,dx\bigr|=&\bigl|\int_{\R^3}\dive(\r
u)|u_t|^2\,dx\bigr|\\
=&2\bigl|\int_{\R^3}\r
u\cdot\na u_t\ |\ u_t\,dx\bigr|\\
\leq &2\sqrt{M}\|u\|_{L^\infty}\|\sqrt{\r}u_t\|_{L^2}\|\na
u_t\|_{L^2},
\end{split}
\eeno which together with the 3-D interpolation inequality that
\beq\label{eq3.13} \|u\|_{L^\infty}\leq C
\|u\|_{\dot{H}^1}^{\f12}\|u\|_{\dot{H}^2}^{\f12}, \eeq implies \beno
\bigl|\int_{\R^3}\r_t|u_t|^2\,dx\bigr|\leq
CM\|u\|_{\dot{H}^1}\|u\|_{\dot{H}^2}\|\sqrt{\r}u_t\|_{L^2}^2+\f16\|\na
u_t\|_{L^2}^2. \eeno Along the same line, we have \beno
\begin{split}
\int_{\R^3}&\r_tu\cdot\na u\ |\ u_t\,dx=-\int_{\R^3}\dive(\r
u)u\cdot\na u\ |\ u_t\,dx\\
=&\sum_{i,j,k=1}^3\Bigl(\int_{\R^3}\r
u^i\p_iu^j\p_ju^ku_t^k\,dx+\int_{\R^3}\r
u^iu^j\p_i\p_ju^ku_t^k\,dx+\int_{\R^3}\r
u^iu^j\p_ju^k\p_iu_t^k\,dx\Bigr).
\end{split}
\eeno Applying H\"older's inequality gives \beno
\begin{split}
\sum_{i,j,k=1}^3\bigl|\int_{\R^3}\r
u^i\p_iu^j\p_ju^ku_t^k\,dx\bigr|\leq &\sqrt{M}\|u\|_{L^\infty}\|\na
u\|_{L^3}\|\na u\|_{L^6}\|\sqrt{\r}u_t\|_{L^2}\\
\leq & C\bigl(\|u\|_{L^\infty}^2\|u\|_{\dot{H}^2}^2+\|\na
u\|_{\dot{H}^{\f12}}^2\|\sqrt{\r}u_t\|_{L^2}^2\bigr),
\end{split}
\eeno and \beno
\begin{split}
\sum_{i,j,k=1}^3\bigl|\int_{\R^3}\r
u^iu^j\p_i\p_ju^ku_t^k\,dx\bigr|\leq
&\sqrt{M}\|u\|_{L^\infty}^2\|\na^2 u\|_{L^2}\|\sqrt{\r}u_t\|_{L^2}\\
\leq &C\|u\|_{L^\infty}^2\bigl(\|
u\|_{\dot{H}^2}^2+\|\sqrt{\r}u_t\|_{L^2}^2\bigr),
\end{split}
\eeno and  \beno
\begin{split}
\sum_{i,j,k=1}^3\bigl|\int_{\R^3}\r
u^iu^j\p_ju^k\p_iu_t^k\,dx\bigr|\leq & M\|u\|_{L^6}^2\|\na
u\|_{L^6}\|\na u_t\|_{L^2}\\
\leq & C\|\na u\|_{L^2}^4\|u\|_{\dot{H}^2}^2+\f16\|\na
u_t\|_{L^2}^2.
\end{split}
\eeno This yields \beno \begin{split} \bigl|\int_{\R^3}\r_tu\cdot\na
u\ |\ u_t\,dx\bigr|\leq &C\bigl(\|u\|_{L^\infty}^2+\|\na
u\|_{\dot{H}^{\f12}}^2+\|\na
u\|_{L^2}^4\bigr)\bigl(\|u\|_{\dot{H}^2}^2+\|\sqrt{\r}u_t\|_{L^2}^2\bigr)+\f16\|\na
u_t\|_{L^2}^2.
\end{split}
\eeno Finally it is easy to observe that \beno
\begin{split}
\bigl|\int_{\R^3}\r u_t\cdot\na u\ |\ u_t\,dx\bigr|\leq
&\sqrt{M}\|u_t\|_{L^6}\|\na u\|_{L^3}\|\sqrt{\r}u_t\|_{L^2}\\
\leq &C\|\na
u\|_{\dot{H}^{\f12}}^2\|\sqrt{\r}u_t\|_{L^2}^2+\f16\|\na
u_t\|_{L^2}^2.
\end{split}
\eeno Resuming the above estimates into \eqref{eq3.12} and using
\eqref{eq3.13} results in \beq\label{eq3.14}
\begin{split}
\f{d}{dt}\|\sqrt{\r}u_t(t)\|_{L^2}^2&+\|\na u_t(t)\|_{L^2}^2\\
\leq & C\bigl(\|\na u(t)\|_{{H}^1}^2+\|u(t)\|_{\dot{H}^1}^4\bigr)
\bigl(\|u(t)\|_{\dot{H}^2}^2+\|\sqrt{\r}u_t(t)\|_{L^2}^2\bigr).
\end{split}
\eeq Whereas it follows from the classical estimates on linear
Stokes operator and \eqref{eq2.20a} that \beno
\begin{split}
\|u\|_{\dot{H}^2}+\|\na\Pi\|_{L^2}\leq &C\bigl(\|\r u_t\|_{L^2}+\|\r
u\cdot\na u\|_{L^2}\bigr)\\
\leq &C\bigl(\sqrt{M}\|\sqrt{\r}u_t\|_{L^2}+M\|u\|_{L^6}\|\na
u\|_{L^3}\bigr)\\
\leq &C\bigl(\|\sqrt{\r}u_t\|_{L^2}+\|\na
u\|_{L^2}^2\bigr)+\f12\|u\|_{\dot{H}^2},
\end{split}
\eeno which yields \beq\label{eq3.15}
\|u\|_{\dot{H}^2}+\|\na\Pi\|_{L^2}\leq
C\bigl(\|\sqrt{\r}u_t\|_{L^2}+\|\na u\|_{L^2}^2\bigr). \eeq
Substituting \eqref{eq3.15} into \eqref{eq3.14} leads to
\eqref{eq3.11}. This finishes the proof of the Lemma.
\end{proof}

\begin{col}\label{col3.1}
{\sl Under the assumptions of Lemma \ref{lem3.2} and that
\beq\label{eq3.15a} \|u\|_{L^\infty(0,T^\ast;H^1)}^2+\|\na
u\|_{L^2(0,T^\ast;H^1)}^2\leq C_0, \eeq
 one has for any $t<T^\ast,$ \beq\label{eq3.16}
\begin{split}
\w{t}\|\na
u(t)\|_{L^2}^2+\int_0^t\w{t'}\bigl(\|u_t(t')\|_{L^2}^2+\|u(t')\|_{\dot{H}^2}^2&+\|\na\Pi(t')\|_{L^2}^2\bigr)\,dt'\\
\leq &C\exp\bigl(CC_0)\|u_0\|_{H^1}^2\eqdefa C_1,
\end{split}
\eeq and \beq\label{eq3.17}
\begin{split}
t\w{t}\bigl(\|u_t(t)\|_{L^2}^2+\|u(t)\|_{\dot{H}^2}^2&+\|\na\Pi(t)\|_{L^2}^2\bigr)+\int_0^tt'\w{t'}\|\na
u_t(t')\|_{L^2}^2\,dt'\\
&\qquad\qquad\qquad\qquad\leq
CC_1(1+C_1)\exp\bigl(CC_0(1+C_0)\bigr)\eqdefa C_2.
\end{split}
\eeq } \end{col}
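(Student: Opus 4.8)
The plan is to run two weighted energy estimates, taking the differential inequalities \eqref{eq3.10} and \eqref{eq3.11} of Lemma \ref{lem3.2} as the starting point and using the global bound \eqref{eq3.15a} as the reservoir of integrated-in-time control; throughout, \eqref{eq2.1a} lets one pass freely between $\|\sqrt{\r}u_t\|_{L^2}$ and $\|u_t\|_{L^2}$. To prove \eqref{eq3.16}, I would multiply \eqref{eq3.10} by the weight $\w{t}$ and rewrite the leading term as $\f{d}{dt}\bigl(\w{t}\|\na u\|_{L^2}^2\bigr)$ minus the weight-derivative contribution $\w{t}'\|\na u\|_{L^2}^2$. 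Since $\w{t}'\lesssim 1$ and $\int_0^t\|\na u\|_{L^2}^2\,dt'\leq C_0$ by \eqref{eq3.15a}, this contribution integrates to a lower-order term controlled by $C_0$. The nonlinear term becomes $\|\na u\|_{H^1}^2\bigl(\w{t}\|\na u\|_{L^2}^2\bigr)$, which is linear in the quantity $\w{t}\|\na u\|_{L^2}^2$ to be estimated and carries the integrable coefficient $\int_0^t\|\na u\|_{H^1}^2\,dt'\leq C_0$. A Gronwall argument then yields \eqref{eq3.16} with the factor $\exp(CC_0)$, keeping the weighted dissipation $\int_0^t\w{t'}\bigl(\|u_t\|_{L^2}^2+\|u\|_{\dot{H}^2}^2+\|\na\Pi\|_{L^2}^2\bigr)\,dt'$ on the left.

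For \eqref{eq3.17} I would use the weight $t\w{t}$, which vanishes at $t=0$ and thereby compensates for the absence of an a priori bound on $\|u_t(0)\|_{L^2}$. Multiplying \eqref{eq3.11} by $t\w{t}$ and noting $(t\w{t})'\lesssim \w{t}$, the weight-derivative term produces $\w{t}\|\sqrt{\r}u_t\|_{L^2}^2$, whose time integral is already controlled by $C_1$ thanks to \eqref{eq3.16}. The prefactor is integrable, $\int_0^t\bigl(\|\na u\|_{H^1}^2+\|u\|_{\dot{H}^1}^4\bigr)\,dt'\lesssim C_0(1+C_0)$ by \eqref{eq3.15a} (bounding $\|u\|_{\dot{H}^1}^4\leq C_0\|u\|_{\dot{H}^1}^2$), which accounts for the factor $\exp\bigl(CC_0(1+C_0)\bigr)$. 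The inhomogeneous source $t\w{t}\bigl(\|\na u\|_{H^1}^2+\|u\|_{\dot{H}^1}^4\bigr)\|\na u\|_{L^2}^4$ is handled by factoring $t\w{t}\|\na u\|_{L^2}^4=\bigl(t\|\na u\|_{L^2}^2\bigr)\bigl(\w{t}\|\na u\|_{L^2}^2\bigr)\lesssim C_1^2$, again via \eqref{eq3.16}, leaving an integrable coefficient. Gronwall then bounds $t\w{t}\|\sqrt{\r}u_t\|_{L^2}^2+\int_0^t t'\w{t'}\|\na u_t\|_{L^2}^2\,dt'$.

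Finally, the $\dot{H}^2$ and pressure parts of \eqref{eq3.17} follow from the Stokes estimate \eqref{eq3.15}, namely $\|u\|_{\dot{H}^2}+\|\na\Pi\|_{L^2}\lesssim \|\sqrt{\r}u_t\|_{L^2}+\|\na u\|_{L^2}^2$: multiplying its square by $t\w{t}$ and inserting the control just obtained on $t\w{t}\|\sqrt{\r}u_t\|_{L^2}^2$ together with \eqref{eq3.16} (to handle $t\w{t}\|\na u\|_{L^2}^4\lesssim C_1^2$) closes the estimate. I expect the main obstacle to be the bookkeeping of the weight derivatives, namely ensuring that the lower-order weighted time integrals produced by \eqref{eq3.16} feed correctly into the Gronwall step for \eqref{eq3.17}; the structural point that makes everything close is that \eqref{eq3.16} renders $\w{t}\|\na u\|_{L^2}^2$ uniformly bounded, so the quartic nonlinearities effectively act as quadratic terms with integrable coefficients.
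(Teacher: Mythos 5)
Your proposal is correct and follows essentially the same route as the paper: multiply \eqref{eq3.10} by $\w{t}$ and \eqref{eq3.11} by $t\w{t}$, absorb the weight-derivative terms using \eqref{eq3.15a} and the previously established bound \eqref{eq3.16}, control the quartic source via $t\w{t}\|\na u\|_{L^2}^4\lesssim C_1^2$, apply Gronwall, and finish with the Stokes estimate \eqref{eq3.15}. The only cosmetic difference is that the paper also invokes the energy equality \eqref{eq2.29} for the lower-order term $\int_0^t\|\na u\|_{L^2}^2\,dt'$, which your use of \eqref{eq3.15a} covers equally well.
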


\begin{proof} We first get, by multiplying \eqref{eq3.10} by
$\w{t},$ that \beno \begin{split} \f{d}{dt}\bigl(\w{t}\|\na
u(t)\|_{L^2}^2&\bigr)+\w{t}\bigl(\|\sqrt{\r}u_t(t)\|_{L^2}^2+\|u(t)\|_{\dot{H}^2}^2+\|\na\Pi(t)\|_{L^2}^2\bigr)\\
&\qquad\qquad\leq \|\na u(t)\|_{L^2}^2+C\|\na
u(t)\|_{{H}^1}^2\w{t}\|\na u(t)\|_{L^2}^2.
\end{split}
\eeno Applying Gronwall's inequality and using \eqref{eq2.29},
\eqref{eq3.15a}  gives rise to \eqref{eq3.16}.

While multiplying \eqref{eq3.11} by $t\w{t}$ results in \beno
\begin{split}
\f{d}{dt}\bigl(t\w{t}&\|\sqrt{\r}u_t(t)\|_{L^2}^2\bigr)+t\w{t}\|\na
u_t(t)\|_{L^2}^2\leq
2\w{t}\|\sqrt{\r}u_t(t)\|_{L^2}^2\\
&\quad+C\bigl(\|\na u(t)\|_{{H}^1}^2+\|u(t)\|_{\dot{H}^1}^4\bigr)
t\w{t}\bigl(\|\sqrt{\r}u_t(t)\|_{L^2}^2+\|\na u(t)\|_{L^2}^4\bigr).
\end{split}
\eeno Applying Gronwall's inequality leads to \beno
\begin{split}
t&\w{t}\|\sqrt{\r}u_t(t)\|_{L^2}^2+\int_0^tt'\w{t'}\|\na
u_t(t')\|_{L^2}^2\,dt'\\
&\leq C\exp\Bigl(C\bigl(\|\na
u\|_{L^2_t({H}^1)}^2+\|u\|_{L^\infty_t(\dot{H}^1)}^2\|u\|_{L^2_t(\dot{H}^1)}^2\bigr)\Bigr)
\Bigl(\int_0^t\w{t'}\|\sqrt{\r}u_t(t')\|_{L^2}^2\,dt'\\
&\qquad\qquad\quad\qquad+\bigl\|\w{t'}\|\na
u(t')\|_{L^2}^2\bigr\|_{L^\infty_t}^2\bigl(\|\na
u\|_{L^2_t({H}^1)}^2
+\|u\|_{L^\infty_t(\dot{H}^1)}^2\|u\|_{L^2_t(\dot{H}^1)}^2\bigr)\Bigr),
\end{split}
\eeno from which, (\ref{eq3.15}-\ref{eq3.16}), we conclude the proof
of \eqref{eq3.17}.
\end{proof}

\begin{prop}\label{prop3.1} {\sl  Let $p\in [1,2)$ and $\beta(p)\eqdefa\frac{3}{4}\Bigl(\frac{2}{p}-1\Bigr).$ Then under the
assumptions of Corollary \ref{col3.1}, if we assume further that
$a_0\eqdefa \f1{\r_0}-1\in L^2(\R^3)$ and $u_0\in L^p(\R^3),$
there holds
\begin{equation}\label{eq3.1} \begin{split}
& \|u(t)\|_{L^2}\leq \left\{\begin{array}{l}
\displaystyle C\w{t}^{-\beta(p)} \qquad\ \mbox{if}\quad 1<p< 2, \\
\displaystyle C\w{t}^{-\bigl(\f34\bigr)_-}\qquad \mbox{if}\quad p=1,
\end{array}\right.
\end{split}
 \end{equation} for any $t<T^\ast,$ where the constant $C$ depends on $\|a_0\|_{L^2},$ $C_0, C_1$ and
 $C_2$ given by Corollary \ref{col3.1}.
 }
\end{prop}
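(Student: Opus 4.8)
The plan is to run Schonbek's Fourier splitting method, adapted to the variable density. Writing the energy identity \eqref{eq2.29} in differential form gives $\f{d}{dt}\|\sqrt\r u\|_{L^2}^2+2\|\na u\|_{L^2}^2=0$, and since $m\le\r\le M$ by \eqref{eq2.1a} the quantity $\|\sqrt\r u\|_{L^2}^2$ is comparable to $\|u\|_{L^2}^2$. I introduce the time-dependent low-frequency ball $S(t)\eqdefa\{\xi\in\R^3:|\xi|\le g(t)\}$ with $g(t)^2\eqdefa k/\w{t}$ for a constant $k$ to be fixed, and bound $\|\na u\|_{L^2}^2\ge g(t)^2\int_{|\xi|>g(t)}|\wh u|^2\,d\xi\ge g(t)^2\bigl(\f1M\|\sqrt\r u\|_{L^2}^2-\int_{S(t)}|\wh u(\xi)|^2\,d\xi\bigr)$. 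This converts the energy identity into $\f{d}{dt}\|\sqrt\r u\|_{L^2}^2+\f{2g^2}{M}\|\sqrt\r u\|_{L^2}^2\le 2g^2\int_{S(t)}|\wh u|^2\,d\xi$, so that everything reduces to controlling the low-frequency mass $\int_{S(t)}|\wh u|^2\,d\xi$.

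To estimate the low frequencies I use the equation. Applying the Leray projection $\mathbb{P}$ to the momentum equation of \eqref{eq1.1}, rewritten via $\r u=u-(1-\r)u$ with $1-\r=a\r$, yields $\pa_t u-\D u=\mathbb{P}\bigl(\pa_t((1-\r)u)-\dive(\r u\otimes u)\bigr)$; by Duhamel and an integration by parts in time for the $\pa_t$ term, $\wh u(t,\xi)$ is the sum of $e^{-|\xi|^2t}\wh u_0$, the boundary contributions $\widehat{\mathbb{P}((1-\r)u)}(t)-e^{-|\xi|^2t}\widehat{\mathbb{P}((1-\r)u)}(0)$, the integral $-|\xi|^2\int_0^te^{-|\xi|^2(t-s)}\widehat{\mathbb{P}((1-\r)u)}\,ds$, and the convection integral $-\int_0^te^{-|\xi|^2(t-s)}\widehat{\mathbb{P}\dive(\r u\otimes u)}\,ds$. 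The linear term is the main one: by Hausdorff--Young $\|\wh u_0\|_{L^{p'}}\le C\|u_0\|_{L^p}$ with $p'\ge2$, so Hölder on $S(t)$ gives $\int_{S(t)}e^{-2|\xi|^2t}|\wh u_0|^2\,d\xi\lesssim\|u_0\|_{L^p}^2|S(t)|^{1-2/p'}\lesssim\|u_0\|_{L^p}^2\w{t}^{-2\beta(p)}$, which is exactly the target rate. For the density terms I use that $a$ is transported by a divergence-free field, so $\|a(t)\|_{L^2}=\|a_0\|_{L^2}$, whence $\|(1-\r)u\|_{L^1}\le\|1-\r\|_{L^2}\|u\|_{L^2}\lesssim\|a_0\|_{L^2}\|u\|_{L^2}$ and thus $|\widehat{\mathbb{P}((1-\r)u)}(s,\xi)|\lesssim\|a_0\|_{L^2}\|u(s)\|_{L^2}$; the convection term obeys $|\widehat{\mathbb{P}\dive(\r u\otimes u)}(s,\xi)|\le|\xi|\,\|\r u\otimes u\|_{L^1}\lesssim M|\xi|\,\|u(s)\|_{L^2}^2$.

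It remains to close the estimate. Substituting these bounds into the differential inequality for $E\eqdefa\|\sqrt\r u\|_{L^2}^2$, the density boundary term is of order $g^5\|a_0\|_{L^2}^2\|u(t)\|_{L^2}^2$ and is absorbed into the left-hand side for large $t$. Fixing $k$ so that $2k/M=\alpha$ with $\alpha>2\beta(p)$ and using the integrating factor $\w{t}^{\alpha}$, the linear source integrates precisely to the rate $\w{t}^{-2\beta(p)}$. The density and convection integrals are self-referential in $\|u(s)\|_{L^2}$, so I close by a bootstrap: assuming $\|u(s)\|_{L^2}^2\le\mathcal{M}\w{s}^{-2\beta(p)}$ on $[0,t]$, the kernels $\int_0^te^{-|\xi|^2(t-s)}(\cdot)\,ds$ are estimated by splitting $[0,t]=[0,t/2]\cup[t/2,t]$ and integrating over $S(t)$, producing contributions that decay at least as fast as $\w{t}^{-2\beta(p)}$ while carrying a higher power of $\mathcal{M}$ or an extra small factor coming from the already established decay $\|\na u(t)\|_{L^2}^2\lesssim C_1\w{t}^{-1}$ of Corollary \ref{col3.1}; a continuity argument then bounds $\mathcal{M}$. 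For $1<p<2$ all the time integrals are strictly convergent and one reaches the full exponent $\beta(p)$; for $p=1$ one has $2\beta(1)=3/2$, which sits exactly at the borderline dictated by the volume $|S(t)|\sim\w{t}^{-3/2}$, so the bootstrap closes only after conceding an arbitrarily small power, giving $\w{t}^{-(3/4)_-}$.

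The main obstacle is precisely this closure. The density contribution $(1-\r)u$ decays only as slowly as $u$ itself, so it cannot be handled purely perturbatively in $\|a_0\|_{L^2}$, and the convection term is genuinely nonlinear; making the bootstrap consistent—selecting $\alpha$, matching the powers of $\mathcal{M}$, and exploiting the auxiliary decay of Corollary \ref{col3.1} to gain the smallness needed to absorb the self-interaction—is the delicate point, together with the endpoint $p=1$ at which only the rate $(3/4)_-$ can be recovered.
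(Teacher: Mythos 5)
Your proof follows the same overall architecture as the paper's — Schonbek's Fourier splitting with $g^2(t)\sim \alpha\w{t}^{-1}$, the energy identity \eqref{eq2.29} converted into a damped ODE for $\|\sqrt{\r}u\|_{L^2}^2$, a Duhamel representation for the low-frequency mass, and Hausdorff--Young for the $e^{t\D}u_0$ piece — but it treats the inhomogeneity in a genuinely different way. The paper divides the momentum equation by $\r$, so the density enters as $a(\D u-\na\Pi)$, estimated by $\|a\|_{L^\infty_t(L^2)}\int_0^t\|(\D u-\na\Pi)(t')\|_{L^2}\,dt'$; this is exactly where the weighted bound \eqref{eq3.17} (the constant $C_2$ of Corollary \ref{col3.1}) is used, and since $\|(\D u-\na\Pi)(t')\|_{L^2}\lesssim (t')^{-1/2}\w{t'}^{-1/2}$ is only borderline integrable, the paper pays a factor $\ln^2\w{t}$, which is precisely what forces the loss $(3/4)_-$ at $p=1$ in Proposition \ref{prop3.1} (removed only later in Proposition \ref{prop3.2}). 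You instead keep the density with the time derivative, write $\r u=u-(1-\r)u$, integrate by parts in time in Duhamel, and estimate the density terms by $\|(1-\r)u\|_{L^1}\le\|1-\r\|_{L^2}\|u\|_{L^2}\lesssim\|a_0\|_{L^2}\|u\|_{L^2}$, with the boundary term $\mathbb{P}((1-\r)u)(t)$ absorbed into the damping for large times; all of these individual estimates are correct, no second-order information is needed, and — contrary to your "borderline" remark — this route avoids the logarithm entirely, so properly executed it gives the full rate $\w{t}^{-3/2}$ for $\|u(t)\|_{L^2}^2$ even at $p=1$, which is stronger than the statement.

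The soft spot is the closure. As written, your bootstrap hypothesis is imposed on all of $[0,t]$, and then the convection contribution produces a term of size $C\mathcal{M}^2$ near $t=0$ (no extra decay factor is available there), so self-improvement forces $C\mathcal{M}\le 1/2$; this is incompatible with $\mathcal{M}\gtrsim\|u_0\|_{L^2}^2+\|u_0\|_{L^p}^2$, i.e.\ the continuity argument cannot close for large data in this form. Moreover the smallness cannot be rescued by the decay $\|\na u(t)\|_{L^2}^2\lesssim C_1\w{t}^{-1}$ of \eqref{eq3.16}: gradient decay gives no information on the low-frequency mass $\int_{S(t)}|\wh u|^2\,d\xi$, which is the quantity at stake. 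Two standard repairs work: (i) start the bootstrap at a large time $T_1$ and use only the energy bound \eqref{eq2.29} on $[0,T_1]$ inside the Duhamel integrals; every self-referential contribution then carries a factor $\w{T_1}^{-1/2}$ relative to $\mathcal{M}$, and the continuity argument closes for every $p\in[1,2)$; or (ii) drop the continuity argument and iterate as the paper does: the energy bound alone yields the unconditional rate $\w{t}^{-\min(2\beta(p),1/2)}$, and reinserting this once into your convection and density integrals yields $\w{t}^{-\min(2\beta(p),3/2)}$, hence \eqref{eq3.1}.
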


\begin{proof} Motivated by \cite{AGZ1}, in order to use Schonbek's strategy in \cite{Schonbek}, we
split the phase-space $\R^3$ into two time-dependent regions so that
\begin{equation*} \|\grad u(t)\|_{L^2}^2 =
\int_{S(t)}|\xi|^2|\hat{u}(t,\xi)|^2 \,
d\xi+\int_{S(t)^{c}}|\xi|^2|\hat{u}(t,\xi)|^2 d\xi,
\end{equation*}
where $S(t)\eqdefa \{\xi: \ |\xi|\leq \sqrt{\f{M}{2}} \; g(t)\}$ and
$g(t)$ satisfies $g(t)\sim \langle{ t}\rangle^{-\frac{1}{2}},$ which
will be chosen later on.  Then due to the energy law \eqref{eq2.29}
of \eqref{eq1.1}, one has
\begin{equation}\label{eq3.2}
\frac{d}{dt}\|\sqrt{\rho}u(t)\|^2_{L^2}+g^2(t)\|
\sqrt{\rho}\;u(t)\|^2_{L^2} \leq
Mg^2(t)\int_{S(t)}|\hat{u}(t,\xi)|^2 \, d\xi
\end{equation}
To deal with the low frequency part of $u$ on the right-hand side of
\eqref{eq3.2},  we rewrite the momentum equations of \eqref{eq1.1}
as
\begin{equation*}
\begin{split} u(t)=&e^{t \Delta} u_0+\int_{0}^{t}e^{(t-t')
\Delta}\mathbb{P}\bigl(-\grad \cdot (u\otimes u)+a(\D u-\grad\Pi)
\bigr)(t')\,d t'.
\end{split}\end{equation*}
where $a\eqdefa\f1\r-1$ and  $\mathbb{P}\eqdefa Id-\na\D^{-1}\dive$
denotes the Leray projection
 operator.
Taking Fourier transform with respect to $x$ variables leads to
\begin{equation*}
\begin{aligned}
|\hat{u}(t,\xi)| \lesssim & e^{-t|\xi|^2} |\widehat{u}_0(\xi)|
+\int_{0}^{t}e^{-(t-t') |\xi|^2}\bigl(|\xi||\cF_x(u\otimes
u)|+|\cF_x(a(\D u-\grad\Pi))|\bigr)(t')\, d t',
\end{aligned}
\end{equation*}
which implies that
\begin{equation}\label{eq3.3}
\begin{split}
\int_{S(t)}|\hat{u}(t,\xi)|^2 d\xi \lesssim &\int_{S(t)}e^{-2 t
|\xi|^2} |\widehat{u}_0(\xi)|^2 d\xi+g^5(t)\Bigl(\int_{0}^{t}
\|\cF_x(u\otimes
u)(t')\|_{L_{\xi}^{\infty}} \,dt' \Bigr)^2\\
\qquad\qquad\qquad\qquad &\qquad\qquad\qquad\qquad+g^3(t)
\Bigl(\int_{0}^{t}\|\cF_x(a(\D u-\grad\Pi) )(t')\|_{L^\infty_\xi}
\,dt' \Bigr)^2.
\end{split}\end{equation}
Thanks to  \eqref{eq3.17}, we have
\begin{equation}\label{eq3.4}
\begin{split}
\Bigl(\int_{0}^{t}\|\cF_x(a (\D
u-\grad\Pi))(t')\|_{L_{\xi}^{\infty}}\, dt'\Bigr)^2 \leq &
\|a\|_{L^\infty_t(L^2)}^2\Bigl(\int_{0}^{t}\|(\D
u-\grad\Pi)(t')\|_{L^2} \, d t'\Bigr)^2\\ \lesssim &\|a_0\|_{L^2}^2
\Bigl(\int_0^t(t')^{-\f12}\w{t'}^{-\f12}\,dt'\Bigr)^{2}\lesssim
\ln^2\w{t}.
\end{split}
 \end{equation}
While it is easy to observe that
 \begin{equation*}\label{2.6}
\Bigl(\int_{0}^{t}\|\cF_x(u\otimes u)(t')\|_{L_{\xi}^{\infty}}\,
dt'\Bigr)^2 \leq \Bigl(\int_{0}^{t}\|u(t')\|_{L^{2}}^2 \,dt'
\Bigr)^2 \lesssim t^2\|u_0\|_{L^2}^2.
 \end{equation*}
  Note that for
$u_0 \in L^{p}(\R^3),$  let
$\frac{1}{q}\eqdefa\frac{4}{3}\beta(p)=\frac{2}{p}-1$ and
$\frac{1}{p}+\frac{1}{p'}=1,$ one has
\begin{equation}\label{eq3.8}
\begin{split}
\int_{S(t)}e^{-2t |\xi|^2} |\widehat{u}_0(\xi)|^2 \, d\xi \lesssim
&\Bigl(\int_{S(t)}e^{-2qt |\xi|^2} \, d\xi\Bigr)^{\frac{1}{q}}
\|\widehat{u}_0\|_{L^{p'}}^{2}\\
\lesssim & \|u_0\|_{L^p}^2\langle{ t}\rangle^{-2\beta(p)},
\end{split}
\end{equation}
where we used the  Hausd\"{o}rff-Young inequality in the last line
so that $\|\widehat{u}_0\|_{L^{p'}}\leq C\|u_0\|_{L^p}.$ Then since
$g(t) \lesssim \langle{ t}\rangle^{-\frac{1}{2}},$ we deduce from
\eqref{eq3.3} that
\begin{equation}\label{eq3.5}
 \int_{S(t)}|\hat{u}(t,\xi)|^2 \, d\xi
 \lesssim
 \langle{ t}\rangle^{-2\beta(p)}+\langle{ t}\rangle^{-\frac{1}{2}}
 \lesssim \left\{\begin{array}{l}
\displaystyle \w{t}^{-\f12}\qquad\ \mbox{if}\quad 1\leq p<\f32, \\
\displaystyle \w{t}^{-2\beta(p)}\quad\mbox{if}\quad \f32\leq p<2,
\end{array}\right.
\end{equation}
In the case when $\f32\leq p<2,$ by
 substituting \eqref{eq3.5} into \eqref{eq3.2}, we obtain
\begin{equation*}\frac{d}{dt}\|\sqrt{\rho}u(t)\|^2_{L^2}+g^2(t)\|
\sqrt{\rho}\;u(t)\|^2_{L^2} \lesssim g^2(t)\langle{
t}\rangle^{-2\beta(p)}\lesssim \langle{ t}\rangle^{-1-2\beta(p)},
\end{equation*}
from which, we infer
\begin{equation*}
 \begin{split}
e^{\int_{0}^{t}g^2(t')\, dt'} \|\sqrt{\rho}u(t)\|^2_{L^2} \lesssim
\|\sqrt{\rho_0}u_0\|^2_{L^2} + \int_{0}^{t}
e^{\int_{0}^{t'}g^2(t')dt' }\langle{ t'}\rangle^{-1-2\beta(p)}\,dt'.
\end{split}
\end{equation*}
Taking $\alpha > 2\beta(p)$ and $g^{2}(t)=\alpha\w{t}^{-1}$ in the
above inequality leads to
\begin{equation*}
\|\sqrt{\rho}u(t)\|^2_{L^2}\langle{ t}\rangle^{\alpha} \lesssim
1+\int_{0}^{t}\langle{ t'}\rangle^{\alpha-1-2\beta(p)}\,dt' \lesssim
1+\langle{ t}\rangle^{\alpha-2\beta(p)},
\end{equation*}
which yields \eqref{eq3.1} for $p\in [3/2,2).$

In the case when $1\leq p<\f32,$ by
 substituting the Estimate \eqref{eq3.5} into \eqref{eq3.2}, one has
\begin{equation*}\label{2.8}\frac{d}{dt}\|\sqrt{\rho}u(t)\|^2_{L^2}+g^2(t)\|
\sqrt{\rho}\;u(t)\|^2_{L^2} \lesssim g^2(t)\langle{
t}\rangle^{-\frac{1}{2}}\lesssim \langle{
t}\rangle^{-\frac{3}{2}},\end{equation*}
 which implies
 \begin{equation*}\label{2.10}
 \begin{split}
e^{\int_{0}^{t}g^2(t')\, dt'} \|\sqrt{\rho}u(t)\|^2_{L^2} \lesssim
\|\sqrt{\rho_0}u_0\|^2_{L^2} + \int_{0}^{t}
e^{\int_{0}^{t'}g^2(t')dt' }\langle{ t'}\rangle^{-\frac{3}{2}}\,dt'.
\end{split}
\end{equation*}
Taking $\alpha > \frac{1}{2}$ and $g^{2}(t)\eqdefa\alpha\w{t}^{-1}$
 in the above inequality results in
\begin{equation*}
\|\sqrt{\rho}u(t)\|^2_{L^2}\langle{ t}\rangle^{\alpha} \lesssim
1+\int_{0}^{t}\langle{ t'}\rangle^{\alpha-\frac{3}{2}}\,dt' \lesssim
1+\langle{ t}\rangle^{\alpha-\frac{1}{2}},
\end{equation*}
which gives
\begin{equation}\label{eq3.6}
\|u(t)\|_{L^2}\lesssim \langle{ t}\rangle^{-\frac{1}{4}}.
\end{equation}
Then by virtue of \eqref{eq3.6}, we write
 \begin{equation}\label{eq3.7}
 \begin{split}
\Bigl(\int_{0}^{t}\|\cF_x(u\otimes u)(t')\|_{L_{\xi}^{\infty}}\,
dt'\Bigr)^2 \leq  \Bigl(\int_{0}^{t}\|u(t')\|_{L^{2}}^2 \,dt'
\Bigr)^2 \lesssim \Bigl(\int_{0}^{t}\langle{
t'}\rangle^{-\frac{1}{2}} \,dt' \Bigr)^2 \lesssim \langle{
t}\rangle.
 \end{split}
 \end{equation}
Resuming the Estimates \eqref{eq3.4}, \eqref{eq3.8} and
\eqref{eq3.7} into \eqref{eq3.3} results in
 \begin{equation}\label{eq3.9}
 \int_{\bar{S}(t)}|\hat{u}(t,\xi)|^2 d\xi
 \lesssim
 \langle{ t}\rangle^{-2\beta(p)}+\langle{ t}\rangle^{-\bigl(\frac{3}{2}\bigr)_-}
 \lesssim \left\{\begin{array}{l}
\displaystyle \w{t}^{-2\beta(p)}\qquad\ \mbox{if}\quad 1<p<\f32, \\
\displaystyle \w{t}^{-\bigl(\f32\bigr)_-}\qquad\ \mbox{if}\quad p=1.
\end{array}\right.
\end{equation}
With \eqref{eq3.9}, we can repeat the previous argument to prove
\eqref{eq3.1} for the remaining  case when $p\in [1,3/2).$ This
completes the proof of the proposition.
\end{proof}

\begin{prop}\label{prop3.2}
{\sl Under the assumptions of Proposition \ref{prop3.1}, there holds
\eqref{eq4.1} for any $t<T^\ast.$ }
\end{prop}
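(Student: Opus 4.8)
The plan is to obtain \eqref{eq4.1} as three successive refinements, feeding each bound into the next. The first inequality, $\|u(t)\|_{L^2}^2\lesssim\w{t}^{-2\beta(p)}$, is just the square of \eqref{eq3.1} from Proposition \ref{prop3.1}, so nothing is needed there. The real task is to upgrade the time weights $\w{t}$ and $t\w{t}$ of Corollary \ref{col3.1} to $\w{t}^{1+2\beta(p)}$ and $t\w{t}^{1+2\beta(p)}$, using the genuine polynomial decay of $\|u(t)\|_{L^2}$ now at our disposal. Throughout I abbreviate $\beta=\beta(p)$.

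For the gradient bound I would avoid a global weighted energy estimate—which at the critical weight $\w{t}^{1+2\beta}$ meets the borderline divergence $\int^t\w{s}^{2\beta}\|\na u\|_{L^2}^2\,ds\sim\int^t\w{s}^{-1}\,ds$—and instead run a Riccati-type inequality. Starting from \eqref{eq3.10} and $\|\na u\|_{H^1}^2\approx\|\na u\|_{L^2}^2+\|u\|_{\dot{H}^2}^2$, for $t$ large enough that $C\|\na u(t)\|_{L^2}^2\le\f12$ (true by \eqref{eq3.16}) one absorbs the $\|u\|_{\dot{H}^2}^2$ part of the right-hand side and reaches
\beno
\f{d}{dt}\|\na u(t)\|_{L^2}^2+\f12\|u(t)\|_{\dot{H}^2}^2\leq C\|\na u(t)\|_{L^2}^4.
\eeno
The interpolation $\|\na u\|_{L^2}^2\le\|u\|_{L^2}\|u\|_{\dot{H}^2}$ together with $\|u(t)\|_{L^2}^2\lesssim\w{t}^{-2\beta}$ yields the quadratic coercivity $\|u\|_{\dot{H}^2}^2\ge c\w{t}^{2\beta}\|\na u\|_{L^2}^4$, so for $t$ large the dissipation dominates the cubic remainder and $y\eqdefa\|\na u\|_{L^2}^2$ obeys $y'\le-c'\w{t}^{2\beta}y^2$. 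Integrating $(1/y)'\ge c'\w{t}^{2\beta}$ and using $\int^t\w{s}^{2\beta}\,ds\sim\w{t}^{1+2\beta}$ gives $\|\na u(t)\|_{L^2}^2\lesssim\w{t}^{-1-2\beta}$ cleanly, the bounded range of $t$ being covered by \eqref{eq3.16}.

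For the top-order quantities I would first invoke \eqref{eq3.15} to reduce everything to $\|\sqrt{\r}u_t\|_{L^2}^2$, since it controls $\|u\|_{\dot{H}^2}^2+\|\na\Pi\|_{L^2}^2$ up to $\|\na u\|_{L^2}^4\lesssim\w{t}^{-2-4\beta}$, and $t\,\w{t}^{-2-4\beta}\lesssim\w{t}^{-1-2\beta}$ makes the latter already admissible. To produce the $t^{-1}$ smoothing together with the sharp decay I would restart the clock: for $t\ge2$, multiply \eqref{eq3.11} by $(s-\f t2)$ and integrate over $[\f t2,t]$, so that
\beno
\f t2\|\sqrt{\r}u_t(t)\|_{L^2}^2\lesssim\int_{t/2}^t\|\sqrt{\r}u_t\|_{L^2}^2\,ds+\int_{t/2}^t(s-\f t2)\bigl(\|\na u\|_{H^1}^2+\|u\|_{\dot{H}^1}^4\bigr)\bigl(\|\sqrt{\r}u_t\|_{L^2}^2+\|\na u\|_{L^2}^4\bigr)\,ds,
\eeno
the coefficient $\|\na u\|_{H^1}^2+\|u\|_{\dot{H}^1}^4$ being time-integrable so that the $\|\sqrt{\r}u_t\|_{L^2}^2$ part is absorbed by Gronwall. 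The first integral is estimated by integrating \eqref{eq3.10} over $[\f t2,t]$: it is bounded by $\|\na u(\f t2)\|_{L^2}^2$ plus a small term, hence $\lesssim\w{t}^{-1-2\beta}$ by the gradient bound just proved. Dividing by $\f t2$ gives $\|\sqrt{\r}u_t(t)\|_{L^2}^2\lesssim t^{-1}\w{t}^{-1-2\beta}$, and \eqref{eq3.15} transfers this to $\|u\|_{\dot{H}^2}^2+\|\na\Pi\|_{L^2}^2$; the range $0<t\le2$ is already covered by \eqref{eq3.17}.

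The main obstacle is exactly the criticality of the target weight $\w{t}^{1+2\beta}$: a single global Gronwall argument with this weight runs into the non-integrable borderline $\w{t}^{-1}$, just as the endpoint $\alpha=2\beta$ is excluded in the Schonbek scheme of Proposition \ref{prop3.1}. The two devices above are what sidestep it—the quadratic coercivity from $L^2$-interpolation turns the gradient estimate into a Riccati inequality whose integration recovers the full power $\w{t}^{1+2\beta}$, while the localization to $[\f t2,t]$ replaces the divergent weighted integral by the finite quantity $\int_{t/2}^t\|\sqrt{\r}u_t\|_{L^2}^2\,ds$ controlled by the gradient decay. The one point demanding care is verifying that all nonlinear contributions on $[\f t2,t]$, in particular the $\|\na u\|_{L^2}^4$ forcing, are of lower order than $\w{t}^{-1-2\beta}$, which follows from $t\,\w{t}^{-2-4\beta}\lesssim\w{t}^{-1-2\beta}$ and the time-integrability of $\|\na u\|_{H^1}^2+\|u\|_{\dot{H}^1}^4$.
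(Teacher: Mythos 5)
Your argument for the range $1<p<2$ is correct, and your Riccati-type derivation of $\|\na u(t)\|_{L^2}^2\lesssim\w{t}^{-1-2\beta}$ (via the coercivity $\|u\|_{\dot{H}^2}^2\geq c\,\w{t}^{2\beta}\|\na u\|_{L^2}^4$ coming from $\|\na u\|_{L^2}^2\leq\|u\|_{L^2}\|u\|_{\dot{H}^2}$) is a genuine alternative to what the paper does: the paper instead multiplies \eqref{eq3.10} by $(t-s)$, applies Gronwall together with the energy equality \eqref{eq3.19} on $[s,t]$, and takes $s=t/2$, which yields $\|\na u(t)\|_{L^2}^2\leq C\w{t}^{-1}\|u(t/2)\|_{L^2}^2$ with no largeness-of-$t$ threshold or smallness absorption needed. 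Your treatment of $\|u_t\|_{L^2}$ by restarting the clock at $s=t/2$ is essentially the paper's route to \eqref{eq3.18}.

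The genuine gap is the endpoint $p=1$. Proposition \ref{prop3.1} does \emph{not} give $\|u(t)\|_{L^2}^2\lesssim\w{t}^{-2\beta(1)}=\w{t}^{-3/2}$; because of the logarithmic loss in \eqref{eq3.4}, its conclusion \eqref{eq3.1} at $p=1$ is only $\|u(t)\|_{L^2}\lesssim\w{t}^{-(3/4)_-}$, with a strictly smaller exponent. So your opening claim that the first inequality of \eqref{eq4.1} ``is just the square of \eqref{eq3.1}, so nothing is needed there'' fails at $p=1$, and since both your Riccati argument and your $u_t$ estimate take the $L^2$ decay as input, every subsequent bound inherits the same epsilon loss: you end with $\w{t}^{-(3/2)_-}$, $\w{t}^{-(5/2)_-}$ and $t^{-1}\w{t}^{-(5/2)_-}$ rather than the full powers demanded by \eqref{eq4.1}. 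The missing idea is a bootstrap. Once the (slightly lossy) derivative decay is known, the source-term estimate \eqref{eq3.4} improves from $\ln^2\w{t}$ to a constant, because now $\int_0^t\|(\D u-\grad\Pi)(t')\|_{L^2}\,dt'\lesssim\int_0^t(t')^{-\f12}\w{t'}^{-(5/4)_-}\,dt'<\infty$; rerunning the Schonbek argument of Proposition \ref{prop3.1} with this improved input gives the full decay $\|u(t)\|_{L^2}^2\lesssim\w{t}^{-3/2}$ at $p=1$, and repeating the derivative estimates then closes \eqref{eq4.1}. This second pass is exactly the final paragraph of the paper's proof; without it, your proposal proves a strictly weaker statement at $p=1$.
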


\begin{proof} With Proposition \ref{prop3.1}, we shall use a  similar argument for the classical Navier-Stokes system
to derive the decay estimates for the derivatives of the velocity
(see \cite{HM06} for instance). In fact, for any $s<t<T^\ast,$ we
deduce from the energy equality of \eqref{eq1.1} that
\beq\label{eq3.19} \f12\|\sqrt{\r}u(t)\|_{L^2}^2+\int_s^t\|\na
u(t')\|_{L^2}^2\,dt'=\f12\|\sqrt{\r}u(s)\|_{L^2}^2. \eeq While
multiplying \eqref{eq3.10} by $(t-s)$ leads to \beno
\begin{split} \f{d}{dt}\bigl((t-s)\|\na
u(t)\|_{L^2}^2\bigr)&+(t-s)\bigl(\|\sqrt{\r}u_t(t)\|_{L^2}^2+\|\na^2u(t)\|_{L^2}^2+\|\na\Pi(t)\|_{L^2}^2\bigr)\\
&\quad\qquad\leq \|\na u(t)\|_{L^2}^2+ C\|\na
u(t)\|_{{H}^1}^2(t-s)\|\na u(t)\|_{L^2}^2,
\end{split}
\eeno Applying Gronwall's inequality and using \eqref{eq3.19}
results in \beno
\begin{split}
(t-s)\|\na u(t)\|_{L^2}^2\leq &\exp\Bigl(C\|\na
u\|_{L^2_t({H}^1)}^2\Bigr)\int_s^t\|\na
u(t')\|_{L^2}^2\,dt'\\
\leq &\f{\exp\bigl(CC_0\bigr)}2\|\sqrt{\r}u(s)\|_{L^2}^2.
\end{split}
\eeno In particular, taking $s=\f{t}2$ gives
$$\|\na u(t)\|_{L^2}^2\leq
C\w{t}^{-1}\|u(t/2)\|_{L^2}^2,$$ from which and \eqref{eq3.1}, we
infer for any $t<T^\ast$  \beq\label{eq3.20} \|\na
u(t)\|_{L^2}^2\leq C \left\{\begin{array}{l}
\displaystyle \w{t}^{-1-2\beta(p)} \qquad\ \mbox{if}\quad 1<p< 2, \\
\displaystyle \w{t}^{-\bigl(\f52\bigr)_-}\qquad\quad \mbox{if}\quad
p=1,
\end{array}\right. \eeq

Similarly by applying  Gronwall's lemma to \eqref{eq3.10} over
$[s,t],$ we write \beq\label{eq3.21} \begin{split} \|\na
u(t)\|_{L^2}^2+&\int_s^t\bigl(\|\sqrt{\r}u_t(t')\|_{L^2}^2+\|\na^2u(t')\|_{L^2}^2+\|\na\Pi(t')\|_{L^2}^2\bigr)\,dt'\\
\leq & \exp\Bigl(C\|\na u\|_{L^2_t({H}^1)}^2\Bigr)\|\na
u(s)\|_{L^2}^2\\
\leq & \exp\bigl(CC_0\bigr)\|\na u(s)\|_{L^2}^2.
\end{split}
\eeq Whereas by multiplying \eqref{eq3.11} by $(t-s)$ and applying
Gronwall's lemma to resulting inequality, we get \beno
\begin{split}
(t-s)\|\sqrt{\r}u_t(t)\|_{L^2}^2\leq
&\bigl(\int_s^t\|\sqrt{\r}u_t(t')\|_{L^2}^2\,dt'+\|\na
u\|_{L^\infty(s,t;L^2)}^4\bigr)\\
&\quad\times\exp\Bigl(C\bigl(\|\na
u\|_{L^2_t({H}^1)}^2+\|u\|_{L^\infty_t(\dot{H}^1)}^2
\|u\|_{L^2_t(\dot{H}^1)}^2\bigr)\Bigr)\\
\leq &\exp\bigl(CC_0(1+C_0)\bigr)\bigl(\|\na u(s)\|_{L^2}^2+\|\na
u\|_{L^\infty(s,t;L^2)}^4\bigr).
\end{split}
\eeno Taking $s=\f{t}2$ in the above inequality and using
\eqref{eq3.20}, we obtain \beno \|u_t(t)\|_{L^2}^2\leq C
\left\{\begin{array}{l}
\displaystyle t^{-1}\w{t}^{-1-2\beta(p)} \qquad\ \mbox{if}\quad 1<p< 2, \\
\displaystyle t^{-1}\w{t}^{-\bigl(\f52\bigr)_-}\qquad\quad\
\mbox{if}\quad p=1.
\end{array}\right. \eeno
which together with \eqref{eq3.15} and \eqref{eq3.20} ensures that
\beq \label{eq3.18}
\begin{split}
& \|u_t(t)\|_{L^2}^2+ \|u(t)\|_{\dot{H}^2}^2+\|\na\Pi(t)\|_{L^2}^2
\leq C \left\{\begin{array}{l}
\displaystyle Ct^{-1}\w{t}^{-1-2\beta(p)} \qquad\ \mbox{if}\quad 1<p< 2, \\
\displaystyle t^{-1}\w{t}^{-\bigl(\f52\bigr)_-}\qquad\qquad
\mbox{if}\quad p=1,
\end{array}\right. \end{split} \eeq
for any $t<T^\ast.$

With \eqref{eq3.20} and \eqref{eq3.18}, it remains to prove
\eqref{eq4.1}  for $p=1.$ As a matter of fact, we first deduce from
\eqref{eq3.18} that
\begin{equation*}
\begin{split}
\Bigl(\int_{0}^{t}\|\cF_x(a (\D
u-\grad\Pi))(t')\|_{L_{\xi}^{\infty}}\, dt'\Bigr)^2 \leq &
\|a\|_{L^\infty_t(L^2)}^2\Bigl(\int_{0}^{t}\|(\D
u-\grad\Pi)(t')\|_{L^2} \, d t'\Bigr)^2\\ \lesssim & \|a_0\|_{L^2}^2
\Bigl(\int_0^t(t')^{-\f12}\w{t'}^{-\bigl(\f54\bigr)_-}\,dt'\Bigr)^{2}\leq
C.
\end{split}
 \end{equation*}
With \eqref{eq3.4} being replaced by the above inequality, by
repeating the proof of Proposition \ref{prop3.1}, we can prove the
first inequality of \eqref{eq4.1} for $p=1.$  Then repeating the
proof of  \eqref{eq3.18}, we conclude the proof of the remaining two
inequalities in \eqref{eq4.1} for $p=1.$ This finishes the proof of
Proposition \ref{prop3.2}.
\end{proof}

\renewcommand{\theequation}{\thesection.\arabic{equation}}
\setcounter{equation}{0}
%%%%%%%%%%%%%%%%%%%%%%%%%%%%%%%%%%%%%%%%%%%%%%
%%%%%%%%%%%%%%%%%%%%%%%%%%%%%%%%%%%%%%%%%%
\section{The proof of Theorem \ref{thm1.1}}

The goal of this section is to complete the proof of Theorem
\ref{thm1.1}. In order to do so, we first prove the following
globally in time Lipschitz estimate for the convection velocity
field, which will be used to prove the propagation of the size for
$\bigl\|\f{a_0}r\|_{L^\infty}.$

\begin{lem}\label{lem4.2}
{\sl  Let $(\r,u,\na\Pi)$ be a c smooth enough axisymmetric solution
of \eqref{eq1.1} on $[0,T^\ast).$ Then  under the assumptions
\eqref{eq1.11} and \eqref{eq1.8}, we have $T^\ast=\infty,$ and there
holds \beq \|\na u\|_{L^1(\R_+;L^\infty)}\leq C, \label{eq4.2} \eeq
for some positive constant depending on $m, M$ and $\|u_0\|_{H^1}.$}
\end{lem}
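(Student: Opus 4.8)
The plan is to start from the global results already established under \eqref{eq1.11} and \eqref{eq1.8}: Proposition \ref{prop2.2} gives $T^\ast=\infty$ together with the global bound \eqref{eq1.14}, which is exactly the hypothesis \eqref{eq3.15a} of Corollary \ref{col3.1} for some $C_0$ depending on $m,M$ and the initial data. Hence Corollary \ref{col3.1} applies for all $t<\infty$, and I would use its two time-weighted consequences,
\[
\w{t}\|\na u(t)\|_{L^2}^2\leq C_1,\qquad t\w{t}\|u(t)\|_{\dot H^2}^2\leq C_2,\qquad \int_0^\infty t'\w{t'}\|\na u_t(t')\|_{L^2}^2\,dt'\leq C_2,
\]
which furnish the decays $\|\na u(t)\|_{L^2}^2\lesssim\w{t}^{-1}$ and $\|u(t)\|_{\dot H^2}^2\lesssim t^{-1}\w{t}^{-1}$ together with a global weighted bound for $\na u_t$. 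The key point is that all of these are available \emph{without} any $L^p$ assumption on $u_0$, since they rest only on the global $H^1$ estimate \eqref{eq1.14}.

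The core of the argument is an interpolation controlling $\|\na u\|_{L^\infty}$. Since in dimension three $u\in\dot H^2$ is just short of controlling $\na u$ in $L^\infty$, I would not interpolate within $L^2$-based spaces but raise the integrability: by Gagliardo--Nirenberg $\|\na u\|_{L^\infty}\lesssim\|\na u\|_{L^2}^{1/4}\|\na^2 u\|_{L^6}^{3/4}$, and then bound $\|\na^2 u\|_{L^6}$ via the stationary Stokes estimate applied to \eqref{eq2.20a}, giving $\|\na^2 u\|_{L^6}\lesssim\|\r u_t\|_{L^6}+\|\r u\cdot\na u\|_{L^6}\lesssim\|\na u_t\|_{L^2}+\|u\|_{L^\infty}\|\na^2 u\|_{L^2}$. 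Using \eqref{eq3.13} in the form $\|u\|_{L^\infty}\lesssim\|\na u\|_{L^2}^{1/2}\|u\|_{\dot H^2}^{1/2}$ then yields
\[
\|\na^2 u\|_{L^6}\lesssim\|\na u_t\|_{L^2}+\|\na u\|_{L^2}^{1/2}\|u\|_{\dot H^2}^{3/2}.
\]

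It remains to integrate in time. After $(a+b)^{3/4}\lesssim a^{3/4}+b^{3/4}$, the Lipschitz norm splits as
\[
\int_0^\infty\|\na u\|_{L^\infty}\,dt\lesssim\int_0^\infty\|\na u\|_{L^2}^{1/4}\|\na u_t\|_{L^2}^{3/4}\,dt+\int_0^\infty\|\na u\|_{L^2}^{5/8}\|u\|_{\dot H^2}^{9/8}\,dt.
\]
For the second integral the decay estimates bound the integrand by $t^{-9/16}\w{t}^{-7/8}$, which is integrable both near $t=0$ and near $t=\infty$. For the first integral I would insert $\|\na u\|_{L^2}\lesssim\w{t}^{-1/2}$, write $\|\na u_t\|_{L^2}^{3/4}=(t\w{t})^{-3/8}\bigl(t\w{t}\|\na u_t\|_{L^2}^2\bigr)^{3/8}$, and apply H\"older in time with exponents $8/3$ and $8/5$; the weighted integral of $\na u_t$ contributes $C_2^{3/8}$, while the remaining factor reduces to $\int_0^\infty t^{-3/5}\w{t}^{-4/5}\,dt<\infty$. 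Combining the two pieces gives \eqref{eq4.2} with a constant depending only on the quantities $C_0,C_1,C_2$ of Corollary \ref{col3.1}, i.e. on $m,M$ and the initial data.

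The main obstacle, and the reason for passing to $L^6$ rather than staying in $\dot H^2$, is precisely the borderline failure of $\dot H^{2}\hookrightarrow W^{1,\infty}$ in three dimensions: one genuinely needs a supercritical norm of $\na^2 u$, and the only way to reach it uniformly in time is through $\na u_t\in L^2$ via the Stokes estimate. The secondary delicacy is integrability near $t=0$, where $\|u\|_{\dot H^2}$ and $\|\na u_t\|_{L^2}$ are controlled only with vanishing weights; the H\"older split is arranged exactly so that the surviving powers of $t$ remain strictly above $-1$.
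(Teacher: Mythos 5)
Your proposal is correct and takes essentially the same route as the paper: Proposition~\ref{prop2.2} plus Corollary~\ref{col3.1} for the time-weighted bounds, the Stokes $L^6$ estimate on \eqref{eq2.20a} driven by the weighted control of $\na u_t$ in $L^2$, and a Gagliardo--Nirenberg interpolation to reach $\|\na u\|_{L^\infty}$, integrated in time via H\"older. The only differences are bookkeeping: the paper interpolates $\|\na u\|_{L^\infty}\lesssim\|u\|_{\dot{H}^2}^{1/2}\|\na^2 u\|_{L^6}^{1/2}$ and first establishes the intermediate weighted bound \eqref{eq4.4} for $\|\na^2 u\|_{L^6}$, whereas you use the exponents $1/4,3/4$ and split the time integral into the $\na u_t$ piece and the nonlinear piece before integrating, with the same outcome.
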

\begin{proof} Under the assumptions of \eqref{eq1.11} and
\eqref{eq1.8}, we deduce from Proposition \ref{prop2.2} that
$T^\ast=\infty$ and moreover Corollary \ref{col3.1} ensures that
 \beq\label{eq4.2a}
\begin{split}
&\sup_{t\in [0,\infty)}\w{t}\|\na
u(t)\|_{L^2}^2+\int_0^\infty\w{t}\bigl(\|u_t(t)\|_{L^2}^2+\|u(t)\|_{\dot{H}^2}^2+\|\na\Pi(t)\|_{L^2}^2\bigr)\,dt
\leq  C_1,\\
&\sup_{t\in [0,\infty)}
t\w{t}\bigl(\|u_t(t)\|_{L^2}^2+\|u(t)\|_{\dot{H}^2}^2+\|\na\Pi(t)\|_{L^2}^2\bigr)+\int_0^\infty
t\w{t}\|\na u_t(t)\|_{L^2}^2\,dt\leq C_2,
\end{split}
\eeq where $C_1$ and $C_2$ given by \eqref{eq3.16} and
\eqref{eq3.17} respectively. In particular, by using Sobolev
imbedding theorem, we obtain \beq\label{eq4.3} \int_0^\infty
t\w{t}\| u_t(t)\|_{L^6}^2\,dt\leq C_2. \eeq

On the other hand, in view of \eqref{eq2.20a}, we deduce from the
classical estimates of linear Stokes operator that \beno \|\na^2
u(t)\|_{L^6}+\|\na\Pi(t)\|_{L^6}\leq
C\bigl(\|u_t(t)\|_{L^6}+\|u(t)\|_{L^\infty}\|\na u(t)\|_{L^6}\bigr),
\eeno which  together with \eqref{eq3.13} yields \beno
\begin{split}
\int_0^\infty & t\w{t}\bigl(\|\na^2
u(t)\|_{L^6}^2+\|\na\Pi(t)\|_{L^6}^2\bigr)\,dt\\
&\leq C\Bigl(\int_0^\infty t\w{t}\|
u_t(t)\|_{L^6}^2\,dt+\int_0^\infty t\w{t}\| u(t)\|_{\dot{H}^1}\|
u(t)\|_{\dot{H}^2}^3\,dt\Bigr).
\end{split} \eeno
Yet it follows from \eqref{eq4.2a} that $$ \int_0^\infty t\w{t}\|
u(t)\|_{\dot{H}^1}\| u(t)\|_{\dot{H}^2}^3\,dt\leq
C\sqrt{C_1}C_2^{\f32}\int_0^\infty t^{-\f12}\w{t}^{-1}\,dt\leq
C\sqrt{C_1}C_2^{\f32},
$$
which together with \eqref{eq4.3} ensures that \beq \label{eq4.4}
\int_0^\infty t\w{t}\bigl(\|\na^2
u(t)\|_{L^6}^2+\|\na\Pi(t)\|_{L^6}^2\bigr)\,dt\leq
CC_2\bigl(1+\sqrt{C_1C_2}\bigr)\eqdefa C_3. \eeq  By virtue of
\eqref{eq4.2a} and \eqref{eq4.4}, we infer \beno
\begin{split}
\int_0^\infty\|\na u(t)\|_{L^\infty}\,dt\leq
&C\int_0^\infty\|u(t)\|_{\dot{H}^2}^{\f12}\|\na^2u(t)\|_{L^6}^{\f12}\,dt\\
\leq
&CC_2^{\f14}\int_0^\infty t^{-\f12}\w{t}^{-\f12}\bigl(t\w{t}\|\na^2 u(t)\|_{L^6}^2\bigr)^{\f14}\,dt\\
\leq &CC_2^{\f14}\Bigl(\int_0^\infty
t^{-\f23}\w{t}^{-\f23}\,dt\Bigr)^{\f34}\Bigl(\int_0^\infty
t\w{t}\|\na^2 u(t)\|_{L^6}^2\,dt\Bigr)^{\f14}\\
\leq &CC_2^{\f14}C_3^{\f14}.
\end{split}
\eeno This gives rise to \eqref{eq4.2}.
\end{proof}

Now we are in a position to complete the proof of Theorem
\ref{thm1.1}.

\begin{proof}[Proof of Theorem \ref{thm1.1}] The general strategy to
prove the existence result to a nonlinear partial differential
equation is first to construct an appropriate approximate solutions,
and then perform the uniform estimates to these approximate solution
sequence, and finally the existence result follows from a
compactness argument. For simplicity, here we just present the {\it
a priori} estimates to smooth enough solutions of \eqref{eq1.3}.

Given axisymmetric initial data $(\r_0,u_0)$ with $\r_0$ satisfying
\eqref{eq1.11} and $a_0\in L^2\cap L^\infty,$ $\f{a_0}r\in
L^\infty,$ $u_0\in H^1,$ we deduce from \eqref{eq2.10} and
\eqref{eq2.14} that there exists a maximal positive time $T^\ast$ so
that \eqref{eq1.3} has a solution on $[0,T^\ast)$ which satisfies
for any $T<T^\ast,$ \beno \|u\|_{L^\infty_T(H^1)}+\|\na
u\|_{L^2_T(H^1)}+\|\p_tu\|_{L^2_T(L^2)}+\|\na\Pi\|_{L^2_T(L^2)}+
\|\Ga\|_{L^\infty_T(L^2)}+\|\na\Ga\|_{L^2_T(L^2)}\leq C, \eeno from
which and Corollary \ref{col3.1}, we deduce that there holds
\eqref{eq1.13}. And hence the uniqueness part of Theorem
\ref{thm1.1} follows from the uniqueness result in \cite{PZZ1}.

Now if $T^\ast<\infty$ and  there holds \beno \lim_{t\to
T^\ast}\bigl\|\f{a(t)}r\bigr\|_{L^\infty}=C_\ast<\infty. \eeno Let
us take $\d$ so small that \beno 2mCC_\ast\leq \f12. \eeno Then we
get, by summing up \eqref{eq2.10} and $2m \d\times$ \eqref{eq2.14},
that \beno
\begin{split}
\f{d}{dt}&\Bigl(\|\wt{\na}
u(t)\|_{L^2}^2+\bigl\|\f{u^r(t)}{r}\bigr\|_{L^2}^2+2m\d\|\Ga(t)\|_{L^2}^2\Bigr)
\\
&\qquad+\|\p_tu\|_{L^2}^2+\|u\|_{\dot{H}^2}^2+\f12\bigl(\|\wt{\na}\Pi\|_{L^2}^2+\|\Ga\|_{L^2}^2\bigr)+\d\|\wt{\na}\Ga\|_{L^2}^2\\
 \leq &
C_\d\Bigl((1+\|u\|_{L^2}^6)\bigl(\|\wt{\na}u\|_{L^2}^2+\bigl\|\f{u^r}r\bigr\|_{L^2}^2\bigr)
\bigl(\|\wt{\na}u\|_{L^2}^2+\|\Ga\|_{L^2}^2\bigr)+(1+\|u^z\|_{L^2}^4)\|\p_zu\|_{L^2}^2\Bigr).
\end{split}
\eeno Applying Gronwall's inequality and using \eqref{eq2.1} leads
to $$\longformule{
\|\wt{\na}u\|_{L^\infty_T(L^2)}^2+\bigl\|\f{u^r}{r}\bigr\|_{L^\infty_T(L^2)}^2+\|\Ga\|_{L^\infty_T(L^2)}^2
+\|\p_tu\|_{L^2_T(L^2)}^2+\|\wt{\na}\Pi\|_{L^2_T(L^2)}^2}{{}+\|u\|_{L^2_T(\dot{H}^2)}^2+\|\Ga\|_{L^2_T(L^2)}^2+\|\wt{\na}\Ga\|_{L^2_T(L^2)}^2\leq
C, } $$ for any $T<T^\ast.$ Therefore we can extend the solution
beyond the time $T^\ast,$ which contradicts with the maximality of
$T^\ast.$ Hence there holds \eqref{eq4.6}.

Under the assumption of \eqref{eq1.8}, we deduce from Proposition
\ref{prop2.2} that $T^\ast=\infty$ and there holds \eqref{eq1.14}.
Moreover, Lemma \ref{lem4.2} ensures that \beno \|\na
u\|_{L^1(\R^+;L^\infty)}\leq C, \eeno which together with
\eqref{eq2.14a} and
$$\bigl\|\frac{u^r}{r}\bigr\|_{L^1(\R^+;L^\infty)}\leq \|\na u
\|_{L^1(\R^+;L^\infty)}$$ gives rise to \eqref{eq1.9}.

Finally with additional assumption that $u_0\in L^p$ for some $p\in
[1,2),$ we deduce from Proposition \ref{prop3.2} that there holds
the decay estimate \eqref{eq4.1}. This finishes the proof of Theorem
\ref{thm1.1}.
\end{proof}

\noindent {\bf Acknowledgments.} We would like to thank
Rapha$\ddot{e}$l Danchin and Guilong Gui for profitable discussions
on this topic.

Part of this work was done when we were visiting Morningside Center
of Mathematics, CAS, in the summer of 2014. We appreciate the
hospitality and the financial support from the Center.  P. Zhang is
partially supported by NSF of China under Grant 11371037, the
fellowship from Chinese Academy of Sciences and innovation grant
from National Center for Mathematics and Interdisciplinary Sciences.


\begin{thebibliography}{50}
\bibitem{Abidi}
H. Abidi, R\'esultats de r\'egularit\'e de solutions
axisym\'etriques pour le syst$\grave{e}$me de Navier-Stokes, {\it
Bull. Sci. Math.}, {\bf 132} (2008),  592-624.

\bibitem{AHK}
H. Abidi, T. Hmidi and S. Keraani, On the global well-posedness for
the axisymmetric Euler equations, {\it  Math. Ann.} {\bf 347}
(2010), 15--41.

\bibitem{AHK2}
H. Abidi, T. Hmidi and S. Keraani, On the global regularity of
axisymmetric Navier-Stokes-Boussinesq system, {\it Discrete Contin.
Dyn. Syst.}, {\bf 29} (2011), 737-756.

\bibitem{AGZ1} H. Abidi, G. Gui and P. Zhang, Stability to
the global large solutions of the 3-D inhomogeneous Navier-Stokes
equations,  {\it Comm. Pure. Appl. Math.},   {\bf 64} (2011),
832-881.

 \bibitem{AGZ2} H. Abidi, G. Gui and P. Zhang,
Well-posedness of 3-D inhomogeneous Navier-Stokes equations with
highly oscillatory initial velocity field, {\it J. Math. Pures
Appl.}, {\bf (9) 100} (2013), 166-203.

\bibitem{BT02} M. Badiale and G. Tarantello,  A Sobolev-Hardy inequality
with applications to a nonlinear elliptic equation arising in
astrophysics, {\it Arch. Ration. Mech. Anal.}, {\bf 163} (2002),
259-293.

\bibitem{CL} D. Chae and J.  Lee,  On the regularity of the axisymmetric
solutions of the Navier-Stokes equations, {\it Math. Z.}, {\bf  239}
(2002),  645-671.

\bibitem{Danchin} R.   Danchin,  Axisymmetric incompressible flows with bounded vorticity,
{\it Russian Math.   Surveys}, {\bf 62} (2007),  73--94.

\bibitem{Danchin04} R. Danchin,  Local and global well-posedness results for flows of
inhomogeneous viscous fluids., {\it Adv. Differential Equations},
{\bf 9} (2004),  353-386.

\bibitem{dm} R. Danchin and P.~B.~ Mucha, A Lagrangian approach for
the incompressible Navier-Stokes equations with variable density,
{\it Comm. Pure. Appl. Math.}, {\bf 65}  (2012), 1458--1480.

\bibitem{dz} R. Danchin and P. Zhang,  Inhomogeneous Navier¨CStokes equations in the half-space, with only bounded density,
{\it  J. Funct. Anal.}, {\bf 267} (2014),  2371-2436.


\bibitem{ga04} L. Grafakos, {\it Classical and modern Fourier analysis,}  Pearson
Education, Inc., Upper Saddle River, NJ, 2004.


\bibitem{HM06} C. He and T. Miyakawa,  On two-dimensional Navier-Stokes flows with rotational symmetries, {\it
Funkcial. Ekvac.}, {\bf 49}  (2006),  163-192.


\bibitem{HR11} T. Hmidi and F.  Rousset,  Global well-posedness for the
Euler-Boussinesq system with axisymmetric data, {\it J. Funct.
Anal.}, {\bf 260} (2011),  745-796.


%\bibitem{HPZ3}  J. Huang, M. Paicu and P. Zhang:  Global wellposedness to incompressible inhomogeneous
% fluid system with bounded density or  non-Lipschitz velocity, {\it Arch. Rational Mech. Anal.}, {\bf  209}  (2013),  631--682.

\bibitem{kim06} H. Kim, A blow-up criterion for the nonhomogeneous
incompressible Navier-Stokes equations, {\it SIAM J. Math. Anal.},
{\bf  37} (2006),  1417-1434.

\bibitem{La} O.~ A.  Lady$\breve{z}$enskaja, Unique global solvability of the three-dimensional Cauchy problem for the Navier-Stokes
 equations in the presence of axial symmetry, (Russian) {\it Zap. Nau$\breve{c}$n. Sem. Leningrad. Otdel. Mat. Inst. Steklov. (LOMI)},
 {\bf 7} (1968), 155-177.

\bibitem{LS} O. Ladyzhenskaya and V. Solonnikov:  The unique solvability of an initial-boundary value problem for viscous incompressible inhomogeneous fluids, {\em
Journal of Soviet Mathematics}, {\bf  9} (1978),  697--749.

\bibitem{LMNP} S. Leonardi, J. M\'{a}lek, J.  Ne\u{c}as and M. Pokorny,  On axially
symmetric flows in $\mathbb{R}^3,$ {\it  Z. Anal. Anwendungen}, {\bf
18} (1999),  639-649.

\bibitem{LP}  P.~L.~ Lions:   {\it Mathematical topics in fluid mechanics.
Vol. 1. Incompressible models}, Oxford Lecture Series in Mathematics
and its Applications, 3. Oxford Science Publications. The Clarendon
Press, Oxford University Press, New York, 1996.


\bibitem{MZ13} C. Miao and X.  Zheng,  On the global well-posedness for the
Boussinesq system with horizontal dissipation, {\it Comm. Math.
Phys.}, {\bf  321} (2013),  33-67.

\bibitem{PZZ1} M. Paicu, P. Zhang and Z. Zhang, Global well-posedness of inhomogeneous Navier-Stokes equations
with bounded density, {\it Comm. Partial Differential Equations},
{\bf 38} (2013), 1208-1234.

\bibitem{Schonbek}  M. Schonbek, Large time behavior of solutions to Navier-Stokes
equations, {\it Comm.in P. D. E.}, {\bf 11}  (1986), 733--763.



%\bibitem{Smets} S. Secchi ,  D. Smets and M. Willem,
%{\it Remarks on a Hardy-Sobolev inequality,}
%C. R. Acad. Sci. paris S\'er. I Math., 336 (2003), no. 10, 811--815.

\bibitem{Simon} J. Simon: Nonhomogeneous viscous incompressible fluids: existence of velocity, density, and
pressure, {\em SIAM J. Math. Anal.}, {\bf 21} (1990),
 1093--1117.


\bibitem{UY} M.~ R. Ukhovskii, and V. I. Iudovich,  Axially symmetric flows of ideal and viscous fluids
filling the whole space, {\it J. Appl. Math. Mech.}, {\bf 32} (1968)
52-61.

\bibitem{ZZ} P.
Zhang and T.  Zhang,  Global axisymmetric solutions to
three-dimensional Navier-Stokes system, {\it  Int. Math. Res. Not.},
IMRN2014, no. 3, 610-642.
\end{thebibliography}
\end{document}